\def\1{$\ }
\def\2{\ $}
\def\N{\mbox{\makebox[.2em][l]{I}N}}
\def\cE{{\cal E}}
\def\cH{{\cal H}}
\def\hqed{\hfill\framebox(6,6){\ }}
\def\ul{\underline}
\def\eiE{\cE^{EI}}
\begin{document}
\title{Cycles as edge intersection hypergraphs of $k$-uniform hypergraphs ($k \le 6$) -- a constructive approach }
%$(k \le 6)$
% -- trees, cacti and clique-fusions}
%\subtitle{Insert your subtitle here, if you have.}
\author{Sophie P\"{a}tz\inst{1} \and Martin Sonntag\inst{2}}% etc
% \thanks is optional - remove next line if not needed
%\thanks{\emph{Present address:} Insert the address here if needed}%
%}                     % Do not remove

%running header
\titlerunning{Cycles as edge intersection hypergraphs}
%\authorrunning{1st. author\inst{1} \and 2nd. author\inst{2}}%

%
%\offprints{}          % Insert a name or remove this line
%
\institute{Formerly: Faculty of Mathematics and Computer Science, Technische Universit\"{a}t Bergakademie Freiberg, Pr\"{u}ferstra\ss e 1, 09596 Freiberg, Germany
%\email {teichert@math.uni-luebeck.de}
\and Faculty of Mathematics and Computer Science, Technische Universit\"{a}t Bergakademie Freiberg, Pr\"{u}ferstra\ss e 1, 09596 Freiberg, Germany
%\email{sonntag@tu-freiberg.de}
}
\maketitle
\begin{abstract}
If $\cH=(V,\cE)$ is a hypergraph, its {\it edge intersection hypergraph} $EI(\cH)=(V,\eiE)$ has the edge set $\eiE=\{e_1 \cap e_2 \ |\ e_1, e_2 \in \cE \ \wedge \ e_1 \neq e_2  \ \wedge  \ |e_1 \cap e_2 |\geq2\}$. In the present paper, we consider 4- and 5-uniform hypergraphs $\cH$, respectively, with $EI(\cH) = C_n$.  Our results fill the gap between the 3- and the 6-uniform case considered in \cite{ST}.
\end{abstract}
\begin{keyword}
Edge intersection hypergraph, cycle
\end{keyword}
\small{\bf  Mathematics Subject Classification 2010:} 05C65
%
%\receive{xxxxxxxxxxxxxxxxxxxx}
%\finalreceive{yyyyyyyyyyyyyyyyy}\
\section{Introduction and basic definitions}
All hypergraphs $\cH = (V(\cH), \cE(\cH))$ and (undirected) graphs $G=(V(G), E(G))$  considered in the following may have isolated vertices but no multiple edges or loops.

A hypergraph $\cH = (V, \cE)$ is {\em $k$-uniform} if all hyperedges $e \in \cE$ have the cardinality $k$.
Trivially, any 2-uniform hypergraph $\cH$ is a graph.
The {\em degree} $d(v)$ (or $d_{\cH}(v)$) of a vertex $v \in V$ is the number of hyperedges $e \in \cE$ being incident to the vertex $v$. $\cH$ is {\em $r$-regular} if all vertices $v \in V$ have the same degree $r = d(v)$.
%$\cH$ is {\em linear} if any two distinct hyperedges $e, e' \in \cE$ have at most one vertex in common.
In standard terminology we follow Berge \cite{GST5}.

If $\cH=(V,\cE)$ is a hypergraph, its {\it edge intersection hypergraph} $EI(\cH)=(V,\eiE)$ has the edge set $\eiE=\{e_1 \cap e_2 \ |\ e_1, e_2 \in \cE \ \wedge \ e_1 \neq e_2  \ \wedge \ |e_1 \cap e_2 |\geq2\}$.

Let $e = \{ v_1, v_2, \ldots, v_l \} \in \eiE$ be a hyperedge in $EI(\cH)$. By definition, in $\cH$ there exist (at least) two hyperedges $e_1, e_2 \in \cE(\cH)$ both containing all the vertices $v_1, v_2, \ldots, v_l$, more precisely $\{ v_1, v_2, \ldots, v_l \} = e_1 \cap e_2 $ -- we say, $e_1$ and $e_2$ {\em generate} the hyperedge $e  \in \eiE$. In this sense, the hyperedges of $EI(\cH)$ describe sets $\{ v_1, v_2, \ldots, v_l \}$ of vertices having a certain, "strong" neighborhood relation in the original hypergraph $\cH$.

For an application as well as to distinguish the edge intersection hypergraph  from the notion {\em intersection graph} (known from the literature) see \cite{STEIH} and \cite{ST}, respectively.

Obviously, for certain hypergraphs $\cH$ the edge intersection hypergraph $EI(\cH)$  can be 2-uniform; in this case $EI(\cH)$ is a simple, undirected graph $G$ with $V(G) = V(\cH)$.
Note that we consistently use our notion "edge intersection hypergraph" also when the hypergraph $EI(\cH)$ is 2-uniform.

The question, which hypergraphs are edge intersection hypergraphs $EI(\cH)$, seems to be  a difficult one. To simplify the problem, as a first step we can restrict ourselves to the case that the edge intersection hypergraph $EI(\cH)$ is $2$-uniform. Moreover, the situation that the underlying hypergraph $\cH$ is $k$-uniform ($k \ge 3$) may also be easier to handle than the general case.

\medskip
{\bf Problem 1.}
 Find classes of graphs being edge intersection hypergraphs of $k$-uniform $(k \ge 3)$ or non-uniform hypergraphs.

\medskip
In \cite{STEIH} the trees and a large class of cacti, respectively, being edge intersection hypergraphs of 3-uniform hypergraphs, have been characterized.

\medskip
For simplification, we often identify the vertices $v_1, v_2, \ldots, v_n$ of a hypergraph $\cH = (V, \cE)$ with their indices, then we have $V = \{ 1,2, \ldots, n \}$.
In general,  the vertices in $V$ will be always taken modulo $n$.

Let $C_n = ( V, E)$ be the cycle having $n$ vertices, such that $V = \{ 1,2, \ldots, n \}$ and $ \cE = \{ \{ i, i+ 1 \} \, | \, i \in V \}$. For $n \ge 5$, it is trivial to find a 3-uniform hypergraph  $\cH$ with $EI(\cH) = C_n$ (see Theorem \ref{TheoC3U} in Section 2). The analogous  problem becomes much more difficult if we consider $k$-uniform hypergraphs $\cH$ for $k > 3$. In \cite{ST} the 6-uniform case has been investigated in combination with the aim to minimize the number of hyperedges in the hypergraph $\cH$. In a certain sense, the minimization of $|\cE(\cH)|$  corresponds to an as simple as possible structure of the underlying hypergraph $\cH$.

This leads to a general question.

\medskip
{\bf Problem 2.}
Let ${\cal G}$ be a class of graphs, $k \ge 3$, $n_0 \in \N^+ $, $n \ge n_0$ and $G_n \in {\cal G}$ a graph with $n$ vertices. What is the minimum cardinality $| \cE |$ of the edge set of a $k$-uniform hypergraph $\cH_n =(V, \cE)$ with $EI( \cH_n) = G_n$?

\medskip

In the following, we exclusively deal with $k$-uniform hypergraphs $\cH$ having the edge intersection hypergraph $EI(\cH) = C_n$. That is in reference to Problem 2,  ${\cal G}$ is the class of cycles $C_n$. The minimum cardinality of the edge set of a $k$-uniform hypergraph $\cH$ with $EI( \cH) = C_n$ will be denoted by $\mu^k_n$.
 Let us mention that many of the results of the present paper come from \cite{Pae}.

First of all, in Section 2 we  give a sufficient condition guaranteeing the minimum cardinality of the edge set $\cE(\cH)$, where $k \ge 3$,
and formulate a corollary which includes $\mu^k_n  \ge \lceil \frac{3 n}{k} \rceil$.
In combination with Theorem \ref{TheoC3U} (Corollary 4 in \cite{STEIH}) and Theorem \ref{TheoC6U} (Theorem 2 in \cite{ST}), this solves Problem 2 (for the cycle $C_n$) in the 3-uniform and the 6-uniform case, respectively. Thereby, we obtain $\mu^3_n = n$ and  $\mu^6_n  = \lceil \frac{n}{2} \rceil$ for $n \ge 5$ and $n \ge 24$, respectively.

The gap between the 3-uniform and the 6-uniform case will be closed (by  results from \cite{Pae}) in Section 3 and (partially) in Section 4, where we construct  4-uniform and 5-uniform hypergraphs $\cH$ with $EI(\cH)=C_n$, respectively.
In the 4-uniform case, for our constructions we need $n \ge 12$ and we have to consider each of the cases $ n \equiv 0, 1, 2 \mbox{ or } 3 \mbox{ mod } 4$ separately. The proofs of the corresponding results are quite long, since they require very detailed case distinctions -- but we succeed to verify $\mu^4_n  = \lceil \frac{3 n}{4} \rceil$.

In Section 4, the 5-uniformity of $\cH$ further complicates things.
In the first part of the Section, we presuppose $n \ge 18$ and use preferably so-called $(3,2)$-hyperedges $e \in \cE(\cH)$ having a special structure. Whereas -- for arbitrary hyperedges in $\cH$ -- the bound $\lceil \frac{3 n}{5} \rceil$ for $\mu^5_n$ is sharp for infinitely many hypergraphs $\cH$, the exclusive usage of $(3,2)$-hyperedges implicates that the hypergraphs $\cH$ with $EI(\cH) = C_n$ have to have at least $\lceil \frac{2 n}{3} \rceil > \lceil \frac{3 n}{5} \rceil$ hyperedges -- and the bound $\lceil \frac{2 n}{3} \rceil$ is sharp for infinitely many hypergraphs, too. At the end of this Section, we give a special construction for $n \ge 20$ and $ n \equiv 0  \mbox{ mod } 5$ using a larger variety of hyperedges, which allows to reduce the cardinality of the edge set $\cE(\cH)$ to the minimum value $\lceil \frac{3 n}{5} \rceil$. We conjecture $\mu^5_n = \lceil \frac{3 n}{5} \rceil$ for all $n \ge 20$.

The voluminous  proofs of the results of Sections 3 and 4  contain lots of cases to consider and can be found in full length in \cite{Pae}. Therefore, in the present paper  we give only the construction of the hypergraph $\cH$ for each case as well as an (illustrated) example for the construction (with minimum number $n$ of the vertices).

\medskip
At the end of the introduction, let us mention a  tool, which is useful for the investigation of small examples. For this end let $G=(V,E)$ and $\cH =(V, \cE)$ be a graph and a hypergraph, respectively, having one and the same vertex set $V$.
The verification of $\cE(EI(\cH)) = E(G)$ can be done  by hand
 or by computer, e.g. using the computer algebra system MATHEMATICA$^{\textregistered}$
 (\cite{WMath}) with the function
\\[0.8ex]
\indent
$EEI[eh\_] :=
 Complement[
  Select[Union[Flatten[Outer[Intersection, eh, eh, 1], 1]],$
\vspace{-1ex}
\begin{flushright} $   Length[\#] > 1 \&], eh],$ \end{flushright}

%\vspace{-2ex}
\noindent
where the argument $eh$ has to be the list of the hyperedges of $\cH$ in the form $\{\{a,b, \ldots, c \}, \ldots,$ $\{x,y, \ldots, z \} \}$. Then $EEI[eh]$ provides the list of the hyperedges of $EI(\cH)$.

%\pagebreak

\section{Some preliminary results}

At first, we give a -- simple, but very useful -- sufficient condition for $| \cE(\cH) |$ to be minimum.

\begin{theorem}[\cite{Pae}]
\label{TheoOPT3REG}
Let $\mathcal{H}=(V,\mathcal{E})$ be $k$-uniform with $EI(\mathcal{H})=C_n$.
If $\mathcal{H}$ is $3$-regular, then $\mathcal{H}$ has a minimum number  $|\mathcal{E}|$ of hyperedges for all  $k$-uniform hypergraphs $\mathcal{H}'$ with $EI(\mathcal{H}')=C_n$, i.e. $|\mathcal{E}| = \mu^k_n$.
\end{theorem}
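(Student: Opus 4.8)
The plan is to sandwich $|\mathcal{E}|$ between a direct count for the $3$-regular hypergraph $\mathcal{H}$ and a lower bound on the edge number that holds for \emph{every} $k$-uniform hypergraph $\mathcal{H}'$ with $EI(\mathcal{H}')=C_n$; since $\mathcal{H}$ is shown to attain that bound, the minimum $\mu^k_n$ is forced to equal $|\mathcal{E}|$.

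First I would pin down the vertex count and compute $|\mathcal{E}|$. By definition $EI(\mathcal{H})$ has the same vertex set as $\mathcal{H}$, so $EI(\mathcal{H})=C_n=(V,E)$ with $|V|=n$ gives $|V(\mathcal{H})|=n$. Now $k$-uniformity together with $3$-regularity, by double counting the incident vertex--hyperedge pairs, yields $k\,|\mathcal{E}|=\sum_{v\in V}d_{\mathcal{H}}(v)=3n$, hence $|\mathcal{E}|=\tfrac{3n}{k}$; as $|\mathcal{E}|$ is an integer, this also shows $\tfrac{3n}{k}=\lceil\tfrac{3n}{k}\rceil$.

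The main work -- and the only real obstacle -- is the lower bound: for an arbitrary $k$-uniform $\mathcal{H}'=(V,\mathcal{E}')$ with $EI(\mathcal{H}')=C_n$, every vertex $v$ satisfies $d_{\mathcal{H}'}(v)\ge3$. I would argue that each hyperedge of $EI(\mathcal{H}')$ incident to $v$ is of the form $e_1\cap e_2$ with distinct $e_1,e_2\in\mathcal{E}'$, $|e_1\cap e_2|\ge2$ and $v\in e_1\cap e_2$, hence is generated by an unordered pair of hyperedges \emph{both passing through $v$}; choosing one generating pair per such $EI$-edge gives an injection into the $\binom{d_{\mathcal{H}'}(v)}{2}$ unordered pairs of distinct hyperedges through $v$, so $v$ is incident to at most $\binom{d_{\mathcal{H}'}(v)}{2}$ hyperedges of $EI(\mathcal{H}')$. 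Since every vertex of $C_n$ has degree $2$, this forces $\binom{d_{\mathcal{H}'}(v)}{2}\ge2$, i.e. $d_{\mathcal{H}'}(v)\ge3$. Summing over $v$ and invoking $k$-uniformity gives $k\,|\mathcal{E}'|=\sum_{v\in V}d_{\mathcal{H}'}(v)\ge3n$, so $|\mathcal{E}'|\ge\tfrac{3n}{k}$, and since $|\mathcal{E}'|$ is an integer, $|\mathcal{E}'|\ge\lceil\tfrac{3n}{k}\rceil$.

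Finally I would combine the pieces: $|\mathcal{E}|=\tfrac{3n}{k}=\lceil\tfrac{3n}{k}\rceil\le|\mathcal{E}'|$ for every $k$-uniform $\mathcal{H}'$ with $EI(\mathcal{H}')=C_n$, whence $|\mathcal{E}|=\mu^k_n$ (and, as a by-product, $\mu^k_n=\lceil\tfrac{3n}{k}\rceil$). Apart from the degree estimate everything is pure bookkeeping; the one point to watch is that the argument uses $\deg_{C_n}(v)=2$, which is valid whenever $C_n$ is a genuine cycle, i.e. for all $n\ge3$ -- in particular throughout the range of $n$ relevant to this paper.
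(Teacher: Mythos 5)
Your proposal is correct and follows essentially the same route as the paper: double-count vertex--hyperedge incidences to get $k\,|\mathcal{E}|=3n$, then show every vertex of any $k$-uniform $\mathcal{H}'$ with $EI(\mathcal{H}')=C_n$ has degree at least $3$ (the paper phrases this via the two $C_n$-edges at $u$ needing generating pairs $e_1,e_2$ and $e_1',e_2'$ with $|\{e_1,e_2,e_1',e_2'\}|\ge 3$, which is the same counting as your $\binom{d}{2}\ge 2$ bound). The only cosmetic difference is that the paper argues by contradiction from $|\mathcal{E}'|<|\mathcal{E}|$ rather than directly summing the degree bound.
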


\begin{proof}
Since $\mathcal{H}$ is $3$-regular, every vertex $v$ is contained in exactly three hyperedges of $\mathcal{H}$. In connection with the $k$-uniformity we obtain  $3 n = \sum_{v\in V} d_{\mathcal{H}}(v)=|\mathcal{E}|\cdot k$.\\
Assume, there is a $k$-uniform hypergraph $\mathcal{H}'=(V,\mathcal{E}')$ with $EI(\mathcal{H}')=C_n$ and $|\mathcal{E}'|<|\mathcal{E}|$. Then $|\mathcal{E}'|\cdot k <|\mathcal{E}|\cdot k =3n$ implies the existence of a vertex $u$ in  $\mathcal{H}'$ having the degree $d_{\mathcal{H}'}(u)<3$. This contradicts  $EI(\mathcal{H}')=C_n$:
 In $C_n$ every vertex $u$ is incident to two distinct edges $e, e' \in E(C_n) = \cE(EI(\mathcal{H}'))$;
  therefore in $\mathcal{H}'$ there have to be hyperedges $e_1, e_2, e'_1, e'_2 \in \cE(\mathcal{H}')$ such that
   $e_1 \cap e_2 = e$ and $e'_1 \cap e'_2 = e'$, where $| \{ e_1, e_2, e'_1, e'_2 \} | \ge 3$. Clearly, $u$ has to be contained in each of the hyperedges $e_1, e_2, e'_1$ and $ e'_2$, i.e.  $d_{\mathcal{H}'}(u) \ge 3$.
   \hqed
\end{proof}

Looking at the end of the proof, for the situation that two hyperedges $e_1, e_2 \in \cE$ generate the edge $e = e_1 \cap e_2 \in E(C_n)$, we also say that $e_1$ and  $e_2$ {\em contribute} to $e$ -- they serve as {\em half-edges} in  $EI(\mathcal{H})$.

The proof of Theorem \ref{TheoOPT3REG} implies an interesting lower bound for the minimum number of hyperedges in the $k$-uniform case.

\begin{corollary}[\cite{Pae}]
\label{CorMIN_l_uniform}
Let $\mathcal{H}=(V,\mathcal{E})$ be $k$-uniform with $EI(\mathcal{H})=C_n$. Then $| \cE | \ge \frac{3 n}{k}$, i.e. $\mu^k_n  \ge \frac{3 n}{k}$.
\end{corollary}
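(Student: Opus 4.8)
The plan is to read off the bound directly from the counting identity and the degree lower bound established inside the proof of Theorem~\ref{TheoOPT3REG}, without assuming $3$-regularity. First I would record the handshaking-type identity for hypergraphs: for any $k$-uniform hypergraph $\mathcal{H}=(V,\mathcal{E})$ we have $\sum_{v\in V} d_{\mathcal{H}}(v) = k\,|\mathcal{E}|$, since each hyperedge, having exactly $k$ vertices, contributes $1$ to the degree of each of its $k$ incident vertices.

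Next I would invoke the key local observation from the end of the proof of Theorem~\ref{TheoOPT3REG}: if $EI(\mathcal{H})=C_n$, then every vertex $u\in V$ has $d_{\mathcal{H}}(u)\ge 3$. The argument is exactly the one already given there: in $C_n$ the vertex $u$ lies on two distinct edges $e,e'\in E(C_n)=\cE(EI(\mathcal{H}))$; each of $e,e'$ is generated by a pair of hyperedges of $\mathcal{H}$, say $e=e_1\cap e_2$ and $e'=e_1'\cap e_2'$, all four of which must contain $u$; and since $e\ne e'$ the multiset $\{e_1,e_2,e_1',e_2'\}$ contains at least three distinct hyperedges, all incident to $u$. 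Hence $d_{\mathcal{H}}(u)\ge 3$ for every $u$, and in fact nothing here requires $k$ to be anything special or $n\ge 5$ beyond $C_n$ being a genuine cycle in which every vertex has degree $2$.

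Combining the two facts gives $k\,|\mathcal{E}| = \sum_{v\in V} d_{\mathcal{H}}(v) \ge 3n$, so $|\mathcal{E}| \ge \frac{3n}{k}$, which is the claim; and since $|\mathcal{E}|$ is an integer this immediately upgrades to $|\mathcal{E}| \ge \lceil \frac{3n}{k}\rceil$ and thus $\mu^k_n \ge \lceil \frac{3n}{k}\rceil$, matching the form announced in the introduction. There is essentially no obstacle here: the corollary is a two-line deduction from material already in hand. The only point deserving a word of care is that the $d_{\mathcal{H}}(u)\ge 3$ bound is applied to \emph{every} vertex uniformly (the proof of the theorem only needed it for one cleverly chosen vertex $u$), but the justification is identical vertex by vertex, so no extra work is needed.
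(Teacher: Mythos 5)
Your proposal is correct and is essentially the paper's own proof: the degree bound $d_{\mathcal{H}}(u)\ge 3$ for every vertex is quoted from the end of the proof of Theorem~\ref{TheoOPT3REG} and combined with the handshaking identity $k\,|\mathcal{E}|=\sum_{v\in V}d_{\mathcal{H}}(v)\ge 3n$. Your added remarks (that the bound applies uniformly to all vertices and can be rounded up to $\lceil 3n/k\rceil$) are accurate but do not change the argument.
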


\begin{proof}
From the above proof we obtain  $d_{\mathcal{H}}(v) \ge 3$ for each $ v \in \cE$. Consequently,
$|\mathcal{E}|\cdot k = \sum_{v\in V} d_{\mathcal{H}}(v) \ge 3 n$ and  $| \cE | \ge \frac{3 n}{k}$. \hqed
\end{proof}

The following (easy to prove) result for 3-uniform hypergraphs can be found as Corollary 4 in \cite{STEIH}.

\begin{theorem}[\cite{STEIH}]
\label{TheoC3U}
Let $n \ge 5$. Then there exists a 3-regular and 3-uniform hypergraph $\cH=(V, \cE)$  with  $EI(\cH) = C_n$ and $|\cE| = n$.
Consequently, $\mu^3_n = n$.
\end{theorem}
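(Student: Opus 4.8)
The plan is to give an explicit construction. Write $C_n = (V, E)$ with $V = \{1, 2, \ldots, n\}$ and $E = \{\{i, i+1\} : i \in V\}$, all indices taken modulo $n$ as in the paper. I would define $\cH = (V, \cE)$ by taking as hyperedges the $n$ consecutive triples
\[
e_i := \{\, i,\ i+1,\ i+2 \,\}, \qquad i \in V .
\]
Then $\cH$ is trivially $3$-uniform. For $n \ge 5$ (indeed $n \ge 4$) the sets $e_i$ are pairwise distinct — the "middle" vertex $i+1$ is the only vertex of $e_i$ adjacent in $C_n$ to both others, so $e_i$ determines $i$ — hence $|\cE| = n$. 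Moreover a vertex $v$ lies in $e_i$ iff $i \in \{v, v-1, v-2\}$, so every vertex is contained in exactly the three hyperedges $e_v, e_{v-1}, e_{v-2}$; thus $\cH$ is $3$-regular.

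The core of the argument is to check that $EI(\cH) = C_n$, i.e. that the intersections of size at least $2$ of distinct hyperedges are exactly the edges of $C_n$. For two distinct hyperedges write the second as $e_{i+t}$ with $1 \le t \le n-1$. A vertex lies in $e_i \cap e_{i+t}$ iff $i+a \equiv (i+t)+b \pmod n$ for some $a, b \in \{0,1,2\}$, i.e. iff $t \equiv a - b \pmod n$, which — using $n \ge 5$, so that $-2, -1, 0, 1, 2$ are pairwise distinct residues — happens only for $t \in \{1, 2, n-2, n-1\}$. For $t = 1$ one gets $e_i \cap e_{i+1} = \{i+1, i+2\}$ and for $t = n-1$ one gets $e_i \cap e_{i-1} = \{i, i+1\}$, each an edge of $C_n$; for $t = 2$ and $t = n-2$ the intersection is a single vertex; for all other $t$ it is empty. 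Hence every intersection of size $\ge 2$ is an edge of $C_n$, and conversely each edge $\{j, j+1\}$ of $C_n$ arises, e.g. as $e_{j-1} \cap e_j$. Therefore $\cE(EI(\cH)) = E(C_n)$, as required.

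Finally, $\mu^3_n = n$ is immediate: $\cH$ is $3$-regular and $3$-uniform with $EI(\cH) = C_n$, so Theorem \ref{TheoOPT3REG} (with $k = 3$) yields $|\cE| = \mu^3_n$, i.e. $\mu^3_n = n$; alternatively, Corollary \ref{CorMIN_l_uniform} gives the lower bound $\mu^3_n \ge n$ and the construction the matching upper bound. I do not expect a genuine obstacle; the only delicate point is the modular bookkeeping in the previous paragraph that rules out spurious intersections of size $\ge 2$ for small $n$ — and this is precisely where the hypothesis $n \ge 5$ enters, since for $n = 4$ the triples $e_1 = \{1,2,3\}$ and $e_3 = \{3,4,1\}$ meet in the two vertices $\{1,3\}$, producing a chord not present in $C_4$.
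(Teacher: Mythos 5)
Your construction is exactly the one the paper uses ($\cE = \{\{i,i+1,i+2\} \mid i \in V\}$, indices mod $n$), and your verification of $3$-regularity, of $\cE(EI(\cH)) = E(C_n)$ via the residue analysis for $n \ge 5$, and of minimality via Theorem \ref{TheoOPT3REG} is correct. The paper merely asserts that this hypergraph "has the required properties," so you have simply supplied the routine checking it omits.
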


{\em To the proof.} The hypergraph  $\cH=(V, \cE)$ with $ \cE = \{ \{i, i+1, i+2 \} \, | \, i \in \{ 1, 2, \ldots, n \} \}$ has the required properties.
\hqed

\bigskip
Firstly, in the 6-uniform case, we cite the (not easy to prove!) Theorem 2 from \cite{ST}.

\begin{theorem}[\cite{ST}]
\label{TheoC6U}
Let $n \ge 24$. Then there exists a hypergraph $\cH=(V, \cE)$  with  $EI(\cH) = C_n$ such that the following holds.
\begin{enumerate}
\item[(i)]
If $n$ is even, then $\cH$ is 3-regular, 6-uniform and $|\cE| = \frac{n}{2}$.
\item[(ii)]
If $n$ is odd, then $\cH$ is 3-regular, $|\cE| = \frac{n+1}{2}$, $\cH$ contains one hyperedge
%${\underline e}$
of cardinality 3 and all other hyperedges in $\cH$ have cardinality 6.
\end{enumerate}
\end{theorem}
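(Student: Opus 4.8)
\medskip
\noindent{\em Idea of the proof (carried out in full in \cite{ST}).}
The plan is to construct $\cH$ explicitly as a ``necklace'' of hyperedges running around $C_n$. For even $n$ I would give each hyperedge two parts: a \emph{block} of four consecutive vertices $\{a,a+1,a+2,a+3\}$ with $a$ odd, and a disjoint \emph{loose domino} $\{b,b+1\}$; with one block per odd $a$ this gives $\frac n2$ hyperedges. Consecutive blocks $\{a,\dots,a+3\}$ and $\{a+2,\dots,a+5\}$ meet exactly in $\{a+2,a+3\}$, so these overlaps already realise all $\frac n2$ edges of $C_n$ with (say) odd left endpoint; each block in turn contains exactly one edge with even left endpoint, namely $\{a+1,a+2\}$, which lies in no other block, so the $\frac n2$ edges with even left endpoint must be realised a second time. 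I would therefore distribute the $\frac n2$ loose dominoes bijectively over these $\frac n2$ edges, subject to the separation conditions mentioned below. Since every vertex then lies in exactly two blocks and in exactly one loose domino, such an $\cH$ is $3$-regular, and it is $6$-uniform because block and domino are disjoint. For odd $n$, where no $3$-regular $6$-uniform hypergraph exists ($6\mid 3n$ forces $n$ even), I would use a suitably modified pattern together with a single hyperedge of cardinality $3$ absorbing the leftover incidence, so that $3\cdot1+6\cdot\frac{n-1}{2}=3n$ still matches $3$-regularity and $|\cE|=\frac{n+1}{2}$.

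Next I would check three things. \emph{(i)} $3$-regularity is the incidence count above. \emph{(ii)} Every edge $\{i,i+1\}$ of $C_n$ is generated: for the overlap edges by two consecutive necklace hyperedges, and for the rest by the unique hyperedge whose block contains $\{i,i+1\}$ together with the unique hyperedge whose loose domino equals $\{i,i+1\}$ -- in each case recording the two generators and checking their intersection is exactly $\{i,i+1\}$. \emph{(iii)} No unwanted intersection occurs: every $e_1\cap e_2$ with $|e_1\cap e_2|\ge2$ is by definition an edge of $EI(\cH)=C_n$ and hence a pair of consecutive vertices, so I must rule out two distinct hyperedges meeting in three or more vertices, or in two non-consecutive ones. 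Step \emph{(iii)} is the hard one, and is why this result ``is not easy to prove''. It forces separation conditions on the domino assignment -- for instance each hyperedge's loose domino must avoid its own block and the neighbouring blocks it overlaps -- and then, with these in force, a finite case distinction over the few types of hyperedge pairs and their relative cyclic positions shows that every remaining pair meets in at most a consecutive pair. The hypothesis $n\ge24$ is used exactly here, to make non-neighbouring hyperedges (and, in the odd case, the repair hyperedge) genuinely disjoint, so that no intersection ``wraps around'' and the analysis reduces to finitely many local configurations.

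Minimality then comes for free. For even $n$ the hypergraph is $3$-regular, so by Theorem~\ref{TheoOPT3REG} it has the fewest hyperedges among all $6$-uniform $\cH'$ with $EI(\cH')=C_n$, i.e. $|\cE|=\mu^6_n=\frac n2=\lceil\frac n2\rceil$. For odd $n$, Corollary~\ref{CorMIN_l_uniform} gives $|\cE'|\ge\frac{3n}{6}=\frac n2$ for every such $\cH'$, hence $|\cE'|\ge\lceil\frac n2\rceil=\frac{n+1}{2}$ since $|\cE'|$ is an integer; the construction attains this, so $\mu^6_n=\lceil\frac n2\rceil$ in both cases.

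The main obstacle, as indicated, is step \emph{(iii)}. With only $\frac n2$ hyperedges available to cover the $n$ edges of $C_n$, the overlaps are tight, and the real work is to produce a loose-domino assignment meeting all the separation requirements and to do so uniformly across the residues of $n$ -- together with the right local repair in the odd case. Once a valid assignment is in hand, the rest is careful bookkeeping.
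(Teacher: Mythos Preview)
The paper does not give its own proof of this theorem; it is quoted verbatim as Theorem~2 of \cite{ST} and explicitly flagged as ``not easy to prove''. So there is no proof in this paper to compare your construction against, and in particular the paper never tells us what the hyperedges of the even-$n$ hypergraph in \cite{ST} actually are. Your $(4,2)$-``block plus loose domino'' scheme is a plausible plan, but whether it coincides with the construction in \cite{ST} cannot be decided from the present paper.

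What the paper \emph{does} spell out is the passage from even to odd $n$, and there your sketch differs from what is reported. In \cite{ST} (as summarised after the theorem) one does not redesign the pattern for odd $n$; instead one takes the already-constructed even hypergraph $\cH'$ on $n-1$ vertices, inserts a new vertex $n$ between $3$ and $4$ on the cycle, replaces $4$ by $n$ in $e_2'$ and $3$ by $n$ in $e_3'$, and adds the single $3$-edge $\{3,n,4\}$. That is a local ``vertex-insertion'' repair of the even construction, not a separate odd-$n$ pattern. Your incidence count $3\cdot1+6\cdot\frac{n-1}{2}=3n$ is of course the same.

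Your minimality paragraph matches the paper's discussion exactly: Theorem~\ref{TheoOPT3REG} for even $n$, and for odd $n$ the observation that one extra hyperedge is forced because the new vertex needs two new half-edges (the paper phrases it this way rather than via Corollary~\ref{CorMIN_l_uniform}, but the conclusion $|\cE|=\lceil n/2\rceil$ is the same). Note, however, that minimality is \emph{not} part of the statement of Theorem~\ref{TheoC6U} itself; it is discussed separately afterwards and then recorded in Corollary~\ref{CorC6U}.
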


In the case $n$ even, note that the number of hyperedges $|\cE| = \frac{n}{2}$ of the hypergraph $\cH$ is minimum for all %3-regul\"{a}r and
6-uniform hypergraphs $\cH'=(V, \cE')$  with  $EI(\cH') = C_n$. This follows from Corollary \ref{CorMIN_l_uniform}. Besides, in \cite{ST} this had been proved in a different way, namely by discussing all possible types of hyperedges which can occur in $\cH$.
It is easy to see that also in the case $n$ odd $\cH$  has a minimum number of hyperedges. The reason can be understood by having a look at the construction of the cardinality-3-hyperedge in the proof of Lemma 3 in \cite{ST}:

First, using the construction described in the proof of Theorem 2 in \cite{ST}, we construct a hypergraph $\cH' = ( V', \cE')$ with $V' = \{ 1,2, \ldots, n-1 \}$, $\cE' = \{e_1', e_2', \ldots, e_{\frac{n-1}{2}}' \} $ having the edge intersection hypergraph $C_{n-1}$. By Theorem \ref{TheoC6U}(i), $\cH'$ has a minimum  number of hyperedges, namely $\frac{n-1}{2}$.

Secondly, to get the hypergraph $\cH$ with $EI(\cH) = C_n$, we add
a new vertex $n$ "between" the vertices 3 and 4 in this cycle, i.e. in $C_{n-1}=EI(\cH')$, and obtain $\cH=(V, \cE)$ by the following  construction.

\medskip
$ V:= V' \, \cup \, \{ n \}$,

\medskip
$ \cE := \{ e_1, e_2, \ldots, e_{\frac{n+1}{2}} \}$, where

\bigskip
%\hspace*{5cm}
$e_i = \left\{ \begin{array}{l@{\;,\quad}l}
 e_i' & i = 1,4,5,6, \ldots, \frac{n-1}{2} \\[0.8ex]
(e_2' \, \setminus \, \{ 4 \}) \, \cup \, \{ n \} & i = 2 \\[0.8ex]
(e_3' \, \setminus \, \{ 3 \}) \, \cup \, \{ n \} & i = 3  \\[0.8ex]
\{ 3, n, 4 \} & i = \frac{n+1}{2} \, .
 \end{array} \right.$

\bigskip

Since in $\cH$ (as well as in $EI(\cH) = C_n$) we have one vertex more than in $\cH'$ (as well as in $C_{n-1}=EI(\cH')$), in $C_n$ we need one edge more than in $C_{n-1}$. For this end, in $\cH$ we need two half-edges more than we have in $\cH'$. For this end, at least one additional hyperedge is needed in $\cH'$ in comparison with $\cH$ -- this is the (new) cardinality-3-hyperedge $e_{\frac{n+1}{2}}$. Note that $e_2, e_3 \in \cE$ are only slight modifications of $e_2', e_3' \in \cE'$ (for details see \cite{ST}).

\medskip

Giving up the 3-regularity we can enforce the 6-uniformity of $\cH$.

\begin{corollary}
\label{CorC6U}
Let $n \ge 24$. Then there exists a 6-uniform hypergraph $\cH=(V, \cE)$  with  $EI(\cH) = C_n$ and $|\cE| = \lceil \frac{n}{2} \rceil $. Moreover, $\mu^6_n = \lceil \frac{n}{2} \rceil$.
\end{corollary}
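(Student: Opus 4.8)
The plan is to derive Corollary~\ref{CorC6U} from Theorem~\ref{TheoC6U} by a small surgery that trades $3$-regularity for $6$-uniformity, and then to invoke Corollary~\ref{CorMIN_l_uniform} for the matching lower bound. For $n$ even there is nothing to do: Theorem~\ref{TheoC6U}(i) already yields a $6$-uniform hypergraph $\cH$ with $EI(\cH)=C_n$ and $|\cE|=\frac{n}{2}=\lceil\frac{n}{2}\rceil$. So the only real work is the case $n$ odd, where Theorem~\ref{TheoC6U}(ii) gives a $3$-regular hypergraph $\cH$ with $|\cE|=\frac{n+1}{2}=\lceil\frac{n}{2}\rceil$ that is $6$-uniform except for a single hyperedge of cardinality $3$, namely (in the construction recalled above) $e_{\frac{n+1}{2}}=\{3,n,4\}$. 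I would enlarge this one short hyperedge to size $6$ by adding three vertices to it, chosen so that no new intersection of size $\ge 2$ is created and no existing intersection is destroyed; this keeps $EI(\cH)=C_n$ while making $\cH$ genuinely $6$-uniform, and the cardinality $|\cE|=\lceil\frac{n}{2}\rceil$ is unchanged.

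The key steps, in order, are: (1) take the hypergraph $\cH$ from Theorem~\ref{TheoC6U}(ii) with its unique cardinality-$3$ hyperedge $f:=\{3,n,4\}$; (2) pick three vertices $x_1,x_2,x_3\in V$ to adjoin to $f$, and set $\cE^\ast:=(\cE\setminus\{f\})\cup\{f\cup\{x_1,x_2,x_3\}\}$; (3) verify $\cE(EI(\cH^\ast))=\cE(EI(\cH))=E(C_n)$, i.e. that the enlargement neither removes nor adds any hyperedge of the edge intersection hypergraph; (4) conclude $\cH^\ast$ is $6$-uniform with $EI(\cH^\ast)=C_n$ and $|\cE^\ast|=\frac{n+1}{2}=\lceil\frac{n}{2}\rceil$; (5) apply Corollary~\ref{CorMIN_l_uniform} to get $\mu^6_n\ge\frac{3n}{6}=\frac{n}{2}$, hence $\mu^6_n\ge\lceil\frac{n}{2}\rceil$ since $\mu^6_n$ is an integer, which together with the construction gives $\mu^6_n=\lceil\frac{n}{2}\rceil$ for all $n\ge 24$.

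The main obstacle is step~(3): choosing $x_1,x_2,x_3$ so that the enlarged hyperedge $f^\ast:=f\cup\{x_1,x_2,x_3\}$ interacts harmlessly with every other hyperedge. Concretely, for each $e\in\cE\setminus\{f\}$ one needs $|f^\ast\cap e|\le 1$ unless $f^\ast\cap e$ was already a hyperedge of $C_n$ arising as $f\cap e$ or from some other pair; and one must make sure the two edges of $C_n$ originally generated with the help of $f$ (at vertices $3$ and $4$, in the recalled construction the edges $\{3,n\}$ and $\{n,4\}$) are still generated, i.e. $f^\ast$ still contains $\{3,n\}$ and $\{n,4\}$ — which it does, since $f\subseteq f^\ast$ — and that no pair $\{f^\ast,e\}$ produces a spurious size-$\ge 2$ intersection outside $E(C_n)$. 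The natural choice is to take $x_1,x_2,x_3$ ``far'' from the support of $f$ on the cycle, e.g. three vertices that are pairwise non-adjacent in $C_n$, mutually non-adjacent to $3,4,n$, and such that no two of them lie together in any hyperedge of $\cH$; since $n\ge 24$ there is ample room, and each hyperedge of $\cH$ has bounded size ($\le 6$), a short counting/pigeonhole argument shows such a triple exists. Once the choice is fixed, verifying (3) is a finite, routine check of the intersections of $f^\ast$ with the $O(1)$ hyperedges it can meet in more than one vertex; the full case analysis is exactly of the flavour carried out in \cite{ST} and can be cited or reproduced there. This completes the proof of Corollary~\ref{CorC6U}. \hqed
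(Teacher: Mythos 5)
Your proposal is correct and follows essentially the same route as the paper: handle $n$ even via Theorem~\ref{TheoC6U}(i), and for $n$ odd enlarge the single cardinality-$3$ hyperedge by three carefully chosen vertices so that no spurious intersections arise, with the lower bound coming from Corollary~\ref{CorMIN_l_uniform}. Your explicit use of the integrality of $\mu^6_n$ to lift $\frac{n}{2}$ to $\lceil\frac{n}{2}\rceil$ in the odd case is a slightly cleaner justification of minimality than the paper's half-edge discussion, but the construction and the (sketched) existence argument for the three added vertices match the paper's proof.
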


{\em To the proof.}  For $n$ even there is nothing to do. \\
For $n$ odd, it suffices to add three vertices $u, v, w \in V$ to the cardinality-3-hyperedge $e_{\frac{n+1}{2}}$ in order to obtain a new hyperedge $\widetilde{e}_{\frac{n+1}{2}}$ having cardinality 6, such that in the resulting edge intersection hypergraph no additional edges are generated by $\widetilde{e}_{\frac{n+1}{2}}$.
Since  $n \ge 25$ holds,  by means of a  case distinction  the existence of suitable vertices $u, v, w \in V$ can be shown -- for shortness, we omit the further details of this proof.

Note that adding the vertices $u, v $ and $w $ to the hyperedge $e_{\frac{n+1}{2}}$ destroys the 3-regularity of the hypergraph, whereas $|\cE| = \lceil \frac{n}{2} \rceil = \frac{n+1}{2}$, i.e. the minimality of $|\cE|$, remains valid.
\hqed

\medskip

\section{The 4-uniform case}

In the Sections 3 and 4, we will make use of the following two notations.

For $i \in \{ 1,2, \ldots, n \}$ and $e \in \cE$, a sequence $(i, i+1, \ldots, i+k-1)$ with $\{i, i+1, \ldots, i+k-1 \} \subseteq e$, such that $i-1 \notin e$ and $i+k \notin e$, is referred to as a {\em $k$-section } of $e$ on $C_n$.

For $t \ge 1$, a hyperedge $e \in \cE$ is an {\em $(l_1, l_2, \ldots, l_t)$-hyperedge}, if and only if $e$ consists of $t$ $l_i$-sections of the cardinalities $ l_1 \ge l_2 \ge \ldots \ge l_t$.

%\smallskip
%Let $\cH=(V, \cE)$  be 4-uniform.

\begin{theorem}[\cite{Pae}]
\label{TheoMinVertex4U}
If $\cH = ( V, \cE)$ is 4-uniform with  $EI(\cH) = C_n$, then $n \ge 11$.
\end{theorem}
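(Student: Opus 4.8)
The plan is to combine the lower bound $|\cE| \ge \lceil 3n/4 \rceil$ from Corollary~\ref{CorMIN_l_uniform} with an upper bound on how many distinct edges of $C_n$ a small set of $4$-element hyperedges can generate, and then show the two bounds are incompatible unless $n$ is large enough. First I would record what a single $4$-uniform hyperedge $e$ can do: since $|e|=4$, the pair-set $\binom{e}{2}$ has only $6$ elements, and an intersection $e_1\cap e_2$ that is an edge of $C_n$ (hence a set $\{j,j+1\}$ of two consecutive vertices) uses up one of these pairs. More importantly, the edges of $C_n$ that can arise as $e\cap e'$ for a \emph{fixed} $e$ are exactly the $2$-sections of $e$ (or the $2$-subsets of longer consecutive runs inside $e$) that happen to be edges of $C_n$; a $4$-element set of vertices on $C_n$ decomposed into sections $(l_1,\dots,l_t)$ with $\sum l_i = 4$ contributes at most $\sum_i (l_i - 1) \le 3$ edges of $C_n$, with equality only for the $(2,2)$-type or a subword of the $(4)$-type, etc. I would make this ``each hyperedge meets at most $3$ edges of $C_n$'' statement precise and note that every edge of $C_n$ must be met (indeed generated) by at least two hyperedges.

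Next I would turn this into a counting inequality. Every edge of $C_n$ is generated, so it lies inside at least two hyperedges; summing the quantity ``number of edges of $C_n$ contained in $e$'' over all $e\in\cE$ gives at least $2n$ (each of the $n$ edges counted $\ge 2$ times), while each term is at most $3$, so $|\cE|\ge 2n/3$. That alone is weaker than $3n/4$ for the relevant range, so the real work is a finer analysis: I would argue that a hyperedge achieving the maximum contribution $3$ must be of type $(2,2)$ or be a consecutive $4$-run $(4)$ three of whose internal pairs are $C_n$-edges, and track how the ``half-edge'' requirement (from the remark after Theorem~\ref{TheoOPT3REG}: generating an edge needs two half-edges, and consecutive edges of $C_n$ need at least $3$ distinct hyperedges among the four half-edges) forces overlaps between hyperedges that reduce the effective count. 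Concretely, for each vertex $v$, $d_\cH(v)\ge 3$ by the proof of Theorem~\ref{TheoOPT3REG}, so $\sum_v d_\cH(v) = 4|\cE| \ge 3n$, i.e. $|\cE|\ge\lceil 3n/4\rceil$; for $n\le 10$ this gives $|\cE|\ge 8$, and I would then derive a contradiction by showing that $8$ hyperedges of size $4$ on $\le 10$ vertices, arranged so that $EI(\cH)=C_n$, cannot avoid generating a spurious edge — i.e. an intersection $e_i\cap e_j$ of size $\ge 2$ that is not an edge of $C_n$, or a chord, or a repeated edge.

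I expect the main obstacle to be precisely this last step: ruling out all configurations for small $n$. The issue is that the global constraint ``$EI(\cH)$ is \emph{exactly} $C_n$, not a supergraph'' is what does the work, and it is hard to capture cleanly — one must show that when the vertex set is small relative to $|\cE|$, the hyperedges are forced to overlap so much that some pair intersects in a non-consecutive pair of vertices or in a set of size $\ge 3$ whose $2$-subsets are not all edges of $C_n$. I would handle this by a case distinction on the section-type multiset of the hyperedges (how many $(2,2)$-, $(3,1)$-, $(4)$-, $(2,1,1)$-hyperedges occur), using that $(3,1)$- and $(2,1,1)$- and $(1,1,1,1)$-hyperedges contribute fewer than $3$ edges so their presence pushes $|\cE|$ up further, and then for the remaining tight cases doing a direct pigeonhole on the $n\le 10$ vertices to exhibit a forced bad intersection. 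The bookkeeping is routine but voluminous; the conceptual content is the interplay between the degree lower bound, the contribution upper bound, and the exactness of $EI(\cH)=C_n$.
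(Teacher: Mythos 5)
Your overall strategy coincides with the paper's: establish the degree bound $d_{\cH}(v)\ge 3$ (hence $|\cE|\ge\lceil 3n/4\rceil$), classify hyperedges by their section types, and then kill $n\le 10$ by an exhaustive case distinction on which combinations of sections can generate all edges of $C_n$ without producing spurious intersections. The paper's own "proof" is exactly such a sketch (it reports, e.g., thirteen subcases for $n=10$ and defers the details to \cite{Pae}), so in that sense you are at the same level of completeness as the text you were asked to reproduce. Two points, however. First, there is a concrete error in your preliminary classification: a $(2,2)$-hyperedge contains only $2$ edges of $C_n$, not $3$; your own formula $\sum_i(l_i-1)=4-t$ gives $2$ for $t=2$, so equality in the bound $\le 3$ holds \emph{only} for $(4)$-hyperedges, and your repeated assertion that $(2,2)$-hyperedges achieve the maximum contribution $3$ is false. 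This is not cosmetic: the whole point of the tight constructions in Section 3 is that $(4)$-hyperedges supply $3$ half-edges but create "middle edges" that force additional $2$-sections, while $(2,2)$-hyperedges supply only $2$ half-edges but two usable $2$-sections; a case analysis built on the wrong contribution count would miscount in every tight case. Second, the decisive step — actually exhibiting the forced bad intersection for each of $n=8,9,10$ (and the "simple" verification for $n\le 7$, which you do not address at all) — is only announced, not performed; since this finite analysis is the entire content of the theorem, your proposal should be read as a correct plan with one local error rather than as a proof.
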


{\em To the proof.}  The verification of $n > 7$ is simple. To show $n \neq 8$, $n \neq 9$ and $n \neq 10$ detailed case distinctions are necessary, where all possible combinations of $k$-sections of the hyperedges $e \in \cE$ of the  hypergraph $\cH$, which are needed to generate the edges of $C_n = EI(\cH)$, have to be discussed. So for $n= 10$, thirteen subcases have to be considered. \hqed

\medskip
For $n \ge 11$ a suitable hypergraph $\cH$ generating $EI(\cH) = C_n$ is given in the sketch of the proof of the following Theorem.

\begin{theorem}[\cite{Pae}]
\label{Theo11_4U}
Let $n \ge 11$. Then there exists a 4-uniform hypergraph $\cH = ( V, \cE)$  with  $EI(\cH) = C_n$ and $| \cE | = n$.
\end{theorem}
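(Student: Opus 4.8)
The plan is to exhibit an explicit family of $4$-uniform hypergraphs $\cH$ with $EI(\cH) = C_n$ and $|\cE| = n$, and then verify the two requirements: (a) every edge $\{i,i+1\}$ of $C_n$ is generated as an intersection of two hyperedges of $\cH$, and (b) no ``spurious'' hyperedge of cardinality $\ge 2$ arises as an intersection of two hyperedges. The natural first attempt, in analogy with the $3$-uniform construction $\{i,i+1,i+2\}$ of Theorem \ref{TheoC3U}, is to take hyperedges of the form $e_i = \{i, i+1, i+2, x_i\}$, where the extra vertex $x_i$ is chosen ``far away'' on the cycle so that it cannot interfere. Indeed, with $e_i = \{i,i+1\} \cup \{\text{two further vertices}\}$ one already gets $e_i \cap e_{i+1} \supseteq \{i+1\}$ — too small; so one wants consecutive hyperedges to overlap in a $2$-section. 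A cleaner candidate: let $e_i$ contain the $2$-section $(i,i+1)$ together with a disjoint $2$-section $(j_i, j_i+1)$ for a suitable offset, i.e. use $(2,2)$-hyperedges. Then $e_i \cap e_{i+1}$ should be exactly $\{i+1\}$ unless the second $2$-sections are arranged to coincide — so instead one takes $e_i = \{i-1, i, i+1, x_i\}$ (a $3$-section plus one isolated vertex), so that $e_i \cap e_{i+1} = \{i, i+1\}$, giving edge $\{i,i+1\}$ of $C_n$ for every $i$. This already generates all of $E(C_n)$ with exactly $n$ hyperedges; the whole difficulty is then suppressing unwanted intersections.

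The key steps, in order, are: (1) fix the construction $e_i = \{i-1,i,i+1\} \cup \{x_i\}$ for $i \in \{1,\dots,n\}$ (indices mod $n$), and specify the rule for the ``decoration'' vertices $x_i$ — presumably $x_i = i + c$ for a constant offset $c$ chosen depending on $n \bmod 4$, or some similarly periodic assignment, chosen so that the $x_i$'s are spread out and $x_i \ne x_j$ forces the relevant intersections to stay small; (2) check that $e_i \cap e_{i+1} = \{i,i+1\}$, so all $n$ cycle edges appear; (3) check that $e_i \cap e_j$ for $|i-j| \ge 2$ (on the cycle) has cardinality $\le 1$ — the two $3$-sections $(i-1,i,i+1)$ and $(j-1,j,j+1)$ are disjoint when $|i-j|\ge 3$, and meet in a single vertex when $|i-j|=2$, so the only danger is that the decoration $x_i$ lands inside $e_j$ or that $x_i = x_j$, which the periodic choice of $x_i$ must rule out; (4) conclude $\cE(EI(\cH)) = E(C_n)$ and $|\cE| = n$. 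Since Theorem \ref{Theo11_4U} is stated for all $n \ge 11$ while the paper's introduction mentions separate treatment of $n \equiv 0,1,2,3 \bmod 4$ for the \emph{minimum-edge} construction (with $\lceil 3n/4\rceil$ hyperedges), here we are only asked for the weaker bound $|\cE| = n$, so a single uniform choice of offset should suffice and the case analysis collapses.

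The main obstacle is step (3): ensuring that the decoration vertices $x_i$ never create an accidental $2$-element intersection. Concretely, $|e_i \cap e_j| \ge 2$ could happen if $\{x_i\} \cup (\{i-1,i,i+1\}\cap\{j-1,j,j+1\})$ has size $\ge 2$ together with $x_i \in e_j$, or if $x_i = x_j$ and additionally the $3$-sections share a vertex. One must choose the offset $c$ (with $11 \le n$ giving enough room) so that $x_i \equiv i+c$ is never within distance $1$ of any $j$ with $|i-j| \le 2$ for which a collision would matter, and so that $i+c \equiv j+c$ never coincides with a shared $3$-section vertex — i.e. essentially $c \notin \{-2,-1,0,1,2\} \pmod n$ plus one or two further forbidden residues. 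For $n \ge 11$ there are at least six admissible residues, so such a $c$ exists; pinning down the short verification that this $c$ works for every pair $(i,j)$ is the only real content, and it is routine once $c$ is fixed. Finally one notes $e_i \ne e_j$ for $i \ne j$ (their $3$-sections differ), so indeed $|\cE| = n$.
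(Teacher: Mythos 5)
Your proposal is correct and is essentially the paper's own construction: the paper takes $\cE = \{\{i,i+1,i+2,i+5\} \mid i \in \{1,\dots,n\}\}$, i.e.\ exactly your ``$3$-section plus one decoration vertex at a constant offset'' with the offset fixed to $c=4$ (measured from the middle vertex), which one checks avoids all the collisions you list precisely when $n \ge 11$. Your count of ``at least six admissible residues'' is optimistic (for $n=11$ only $c \equiv \pm 4$ survive), but the existence claim, and hence the argument, stands.
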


{\em To the proof.} Choose $\cE = \{ \{ i, i+1, i + 2, i + 5 \} \, | \, i \in \{ 1, 2, \ldots, n \} \}$. \hqed
\\[0.5ex]

{\em Example.} For $n = 11$ we have $\cH = ( V, \cE)$ with $V = \{ 1, 2, \ldots, 11 \}$ and \\[1ex]
$\cE = \{ \{1,2,3,6\},\{2,3,4,7\},\{3,4,5,8\},\{4,5,6,9\},\{5,6,7,10\},\{6,7,8,11\},\{7,8,9,1\},\{8,9,10,2\},$ \\
\phantom{$\cE = \{ $}$\{9,10,11,3\},
\{10,11,1,4\},\{11,1,2,5\}\}$.

\medskip
In principle, we use the same construction as in the proof of Theorem \ref{TheoC3U} (Corollary 4 in \cite{STEIH}). We only have to add a fourth, "innocuous" vertex to the cardinality-3-hyperedges from \cite{STEIH} in order to guarantee the 4-uniformity of our present hypergraph (cf. Fig. 1). This is the reason for the minimum cardinality 11 of the vertex set we need here.
\begin{figure}[h]
	\centering
	\includegraphics[width=11cm]{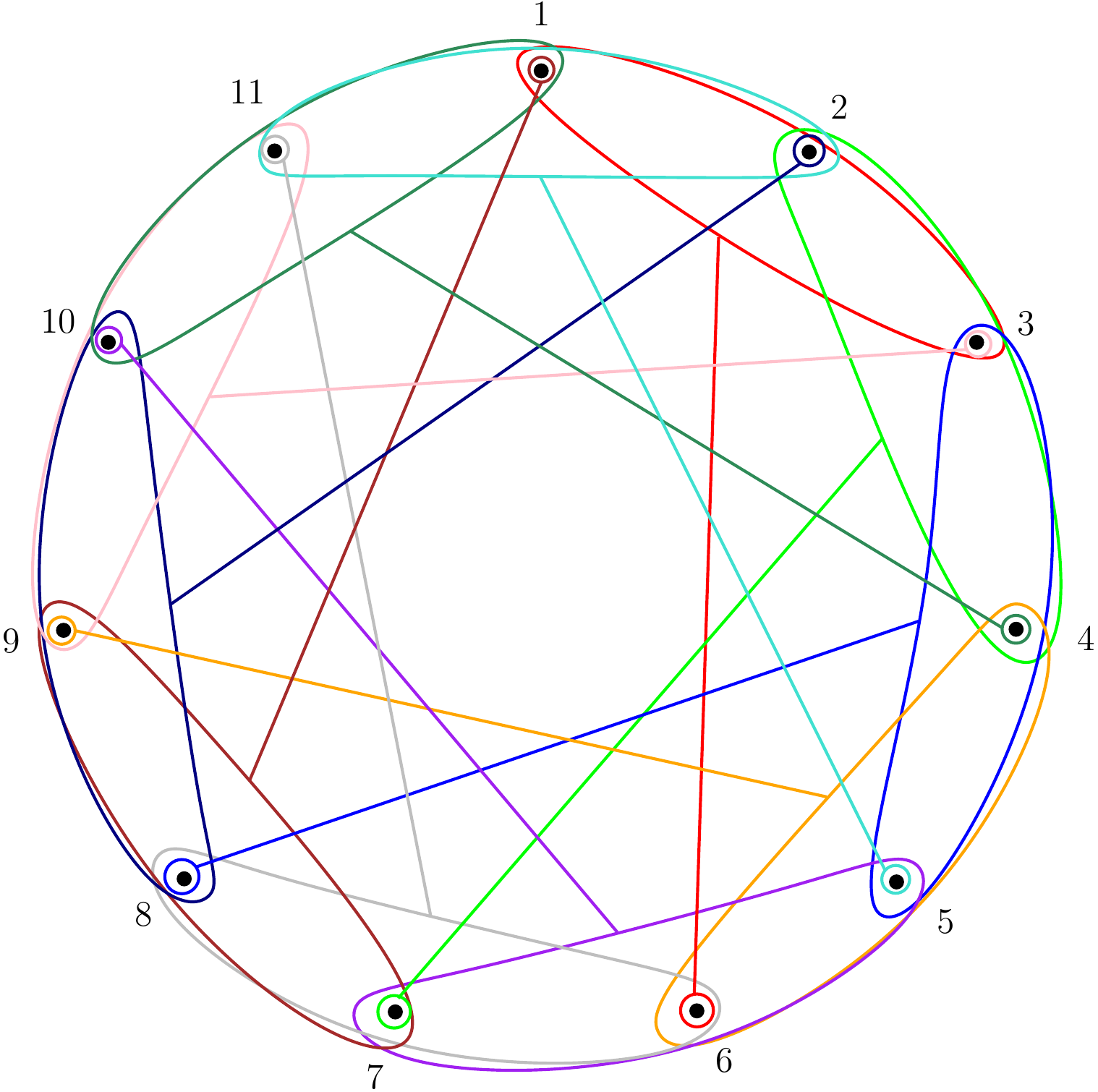}
	\caption{A $4$-uniform hypergraph $\cH=(V,\cE)$ mit $EI(\mathcal{H})=C_{11}.$}
    \label{img:Bsp11Knoten}
\end{figure}

\bigskip
%\vspace{1cm}
To obtain results with  minimum cardinality of the set $\cE(\cH)$ of hyperedges, we assume $n \ge 12$ and consider each of the cases
$n \equiv i \mbox{ mod } 4$ separately, for all $i \in \{ 0,1,2,3 \}$.

\begin{theorem}[\cite{Pae}]
\label{TheoMain_4U}
Let $n \ge 12$. Then there exists a 4-uniform hypergraph $\cH=(V, \cE)$  with  $EI(\cH) = C_n$ and $|\cE| = \lceil \frac{3}{4} n \rceil $. Therefore, $\mu^4_n = \lceil \frac{3}{4} n \rceil$.
\end{theorem}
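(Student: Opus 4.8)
The lower bound $|\cE| \ge \lceil \frac{3n}{4} \rceil$ is already supplied by Corollary~\ref{CorMIN_l_uniform}: since the right-hand side $\frac{3n}{4}$ need not be an integer and $|\cE|$ is, we immediately get $|\cE| \ge \lceil \frac{3n}{4} \rceil$. Hence the entire content of the theorem is the \emph{construction} of a $4$-uniform hypergraph $\cH$ with $EI(\cH) = C_n$ that attains this bound, for every $n \ge 12$. The natural strategy, in view of Theorem~\ref{TheoOPT3REG}, is to build $\cH$ as close to $3$-regular as possible: if $\cH$ were $3$-regular and $4$-uniform we would have $3n = 4|\cE|$, so a $3$-regular solution exists only when $4 \mid n$, and then it automatically has $|\cE| = \frac{3n}{4} = \mu^4_n$. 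For $n \not\equiv 0 \bmod 4$ we cannot be fully $3$-regular, and the residue class of $n$ modulo $4$ dictates how many vertices must carry degree $\ge 4$ (or, dually, how the ``defect'' is distributed); this is exactly why the four cases $n \equiv 0,1,2,3 \bmod 4$ must be treated separately.

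The plan is therefore as follows. First, I would fix a periodic ``block'' pattern of hyperedges along the cycle $C_n$ that, in the bi-infinite (or exactly-divisible) situation, is $3$-regular and $4$-uniform and generates precisely the edges $\{i,i+1\}$ of $C_n$ and nothing else. Concretely one wants each hyperedge to be an $(l_1,l_2,\dots)$-hyperedge whose sections, when two hyperedges are intersected, produce exactly one consecutive pair $\{i,i+1\}$ and never a set of size $\ge 2$ that is not such a pair; a block of a small number of hyperedges spanning a small number of consecutive vertices, repeated, does the job when the period divides $n$. For $n \equiv 0 \bmod 4$ this repeated block \emph{is} the whole hypergraph, it is $3$-regular, and Theorem~\ref{TheoOPT3REG} gives optimality. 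Second, for each of the residues $n \equiv 1,2,3 \bmod 4$, I would take the $n \equiv 0$ construction on $\lfloor n/4 \rfloor \cdot 4$ vertices and splice in a short ``patch'' of $1$, $2$, or $3$ extra vertices together with a correspondingly small number of extra or modified hyperedges, arranged so that (a) the global count becomes $\lceil \frac{3n}{4}\rceil$, (b) the two new ``boundary'' edges of the enlarged cycle are generated, and (c) no spurious hyperedge of size $\ge 2$ appears in $EI(\cH)$. The role of the hypothesis $n \ge 12$ is to guarantee there is enough room for the periodic part to coexist with the patch without the patch's hyperedges interacting with distant copies of the block.

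The verification in each case splits into two routine-but-lengthy checks: every edge $\{i,i+1\}$ of $C_n$ arises as $e_1 \cap e_2$ for some pair of hyperedges, and conversely no pair $e_1,e_2 \in \cE$ has $|e_1 \cap e_2| \ge 2$ with $e_1 \cap e_2 \notin E(C_n)$ — in particular, no intersection has size $\ge 3$, and no intersection equals a non-consecutive pair $\{i,j\}$. Because the construction is (outside the patch) periodic with small period, both checks reduce to finitely many local configurations, independent of $n$; only the finitely many pairs involving the patch need separate inspection. I would organise this as a small table per residue class, listing each hyperedge as an $(l_1,l_2,\dots)$-type and each generated edge, exactly in the style of the $n=11$ example above.

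\textbf{Main obstacle.} The hard part is purely combinatorial bookkeeping rather than any single clever idea: designing the patch for each of $n \equiv 1,2,3 \bmod 4$ so that it simultaneously realises the two new cycle-edges, keeps every vertex at degree $\ge 3$ (forced) while adding the minimum possible number of hyperedges, and introduces \emph{no} unwanted large or non-consecutive intersection — neither among the patch hyperedges themselves nor between a patch hyperedge and a nearby block hyperedge. Getting the arithmetic of $|\cE|$ to land on $\lceil \frac{3n}{4}\rceil$ in all three non-trivial residues, rather than one more, is the delicate point, and it is precisely the source of the ``very detailed case distinctions'' the authors allude to; the full case analysis is deferred to \cite{Pae}, and here one exhibits the constructions and a minimal illustrative example for each residue.
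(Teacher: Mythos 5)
Your lower-bound argument via Corollary~\ref{CorMIN_l_uniform} and the observation that a $3$-regular solution can only exist when $4 \mid n$ are both correct, and your overall ``construct, then verify no spurious intersections'' framing matches the paper's. But there is a genuine gap: the entire content of this theorem is the explicit construction, and you never exhibit one. You describe a meta-strategy (a locally periodic block of small period, repeated around the cycle, plus a short patch for $n \not\equiv 0 \bmod 4$) and assert that such a block ``does the job,'' but you neither specify its hyperedges nor check that it generates only the edges of $C_n$. This is not a harmless omission, because the most natural local placements fail: e.g.\ if a $(2,2)$-hyperedge carries its second $2$-section only a few positions away, it typically meets one of the overlapping $(4)$-hyperedges $\{i,i+1,i+2,i+3\}$ in a \emph{non-consecutive} pair, creating an unwanted edge in $EI(\cH)$. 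The paper's actual construction avoids exactly this by making the two $2$-sections of each $(2,2)$-hyperedge antipodal, i.e.\ $\{i,i+1,\frac{n}{2}+i,\frac{n}{2}+i+1\}$ — a global, not local, pattern — so the ``small period dividing $n$'' picture you rely on is not what happens, and its feasibility is unproved in your write-up.

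Your patching heuristic for $n \equiv 1,2,3 \bmod 4$ also underestimates what is needed. The paper does not splice extra vertices into a smaller solution; it gives a separate closed-form edge set per residue class, and the non-zero residues force qualitatively new features that a generic ``patch'' does not predict: a $(3,1)$-hyperedge such as $\{n,1,2,5\}$ in Cases 1 and 3, and a ``triply-generated'' edge (one cycle edge arising as the pairwise intersection of three distinct hyperedges) in Cases 2 and 3, which is how the parity of the half-edge count is reconciled with $4$-uniformity. Without producing concrete hyperedge sets realizing $|\cE| = \lceil \frac{3n}{4}\rceil$ in each residue class and at least indicating why no intersection of size $\ge 2$ falls outside $E(C_n)$, the proposal remains a plan rather than a proof.
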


{\em To the proof.}
As mentioned above,  in each case we give only the construction of a suitable hypergraph $\cH = ( V, \cE)$ (for every possible $n \ge 12$) as well an example with the minimum number $n$ of the vertices. The detailed proof of $\cE(EI(\cH))  \subseteq E(C_n)$ always causes much more effort than the verification of $E(C_n) \subseteq \cE(EI(\cH))$ (see \cite{Pae}). In other words, the more difficult part of the proof is to show that the hyperedges of $\cH$ do not generate "unwanted" (hyper-)edges in $EI(\cH)$.

%Before coming to the construction of $\cH$, we mention that to show $\mu^4_n = \lceil \frac{3}{4} n \rceil$ requires to discuss, how many edges in $EI(\cH)$ can be generated by the different types of hyperedges in $\cH$ in each of the cases under consideration (see \cite{Pae}).

%Now -- for all four cases -- we give the edge set $\cE$ of the desired hypergraph $\cH = ( V, \cE)$ and an example with minimum $n$.

Now we discuss the four possible cases for $n = |V|$.
%\pagebreak

\medskip
\noindent
{\ul{\em Case 0: $n \equiv 0 \mod 4$.}}

\smallskip

In the present case, to construct the  hypergraph $\cH = ( V, \cE)$, we use two types of hyperedges. The first type consists of a 4-section and the second one contains two 2-sections.

\noindent
$\mathcal{E}=\{e_i=\{i,i+1,i+2,i+3\} \text{ }| \text{ }i \in \{1,3,\ldots,n-1\}\}  \quad \cup $ \\[0.5ex]
\phantom{$\mathcal{E}=$ }$ \{e_i=\{i,i+1,\frac{n}{2}+i,\frac{n}{2}+i+1\} \text{ }| \text{ }i \in \{2,4,\ldots,	
\frac{n}{2}\}\}$.

\bigskip
{\em Example.} For $n = 12$ we have $\cH = ( V, \cE)$ with $V = \{ 1, 2, \ldots, 12 \}$ and \\[1ex]
$\mathcal{E}=\{ \{1,2,3,4\},\{3,4,5,6\},\{5,6,7,8\}, \{7,8,9,10\},\{9,10,11,12\},\{11,12,1,2\}$ \\[0.5ex]
\phantom{$\mathcal{E}= \{$}$ \{2,3,8,9\}, \{4,5,10,11\}, \{6,7,12,1\} \}$.
\begin{figure}[h]
	\centering
	\includegraphics[width=10cm]{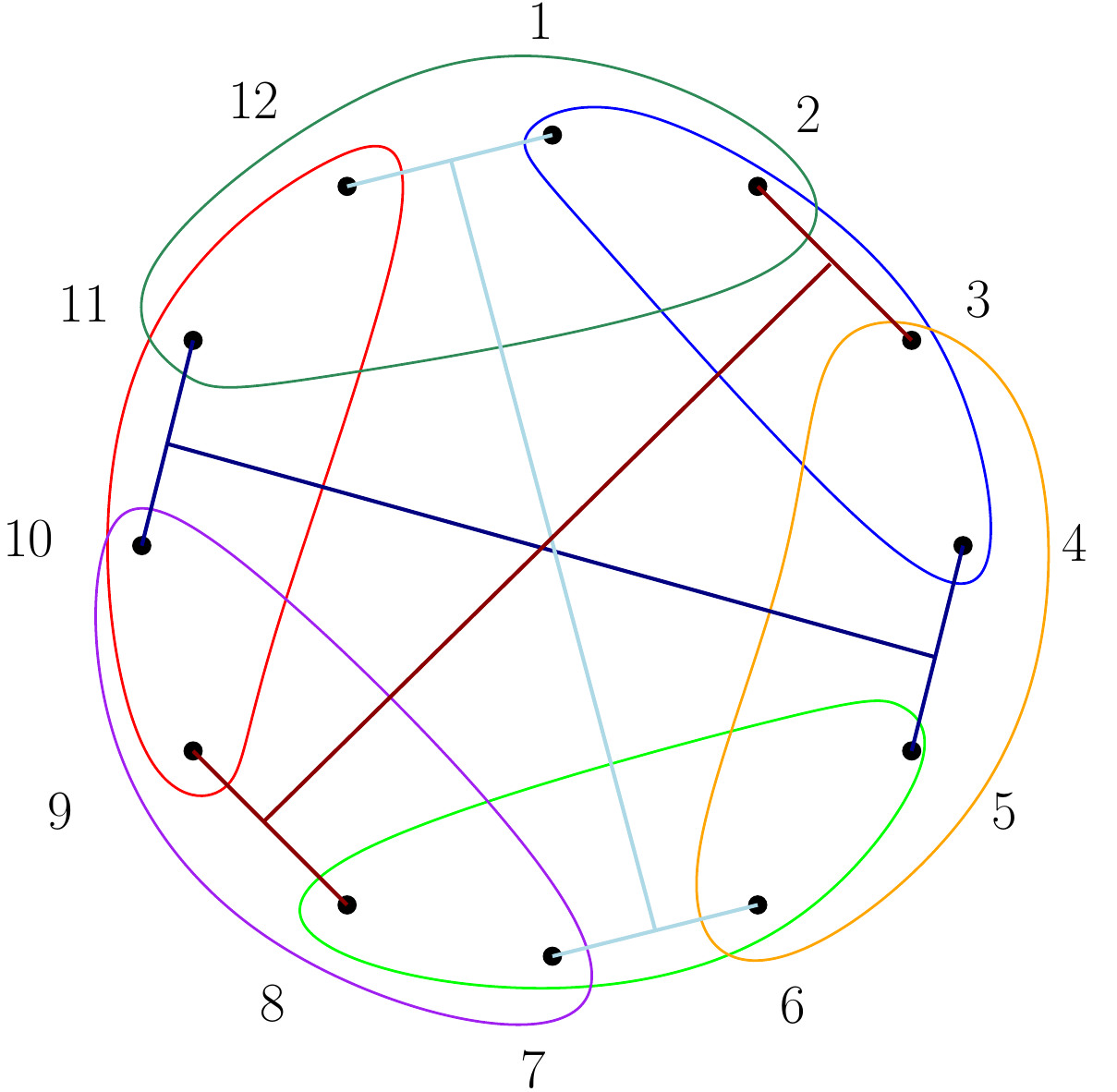}
	\caption{A $4$-uniform hypergraph $\mathcal{H}=(V,\mathcal{E})$ with $EI(\mathcal{H})=C_{12}.$}
	\label{img:Bsp12Knoten}
\end{figure}

%\smallskip
%\vspace{1cm}
\noindent
{\ul{\em Case 1: $n \equiv 1 \mod 4$.}}

\smallskip

Whereas in Case 0 we had two different types of hyperedges in $\cH$, namely $(4)$-hyperedges and $(2,2)$-hyperedges, in the present Case 1 we need additionally a $(3,1)$-hyperedge as a third type.

\medskip
\noindent
$\mathcal{E}=\{e_i=\{i,i+1,i+2,i+3\} \text{ }| \text{ }i \in \{1,3,\ldots,n-2 \}\} \quad \cup $ \\[0.5ex]
\phantom{$\mathcal{E}=$ }$ \{e_i=\{i,i+1,\frac{n-1}{2}+i,\frac{n-1}{2}+i+1\} \text{ }| \text{ }i \in \{2,4,\ldots,	
	\frac{n-1}{2}\}\} \quad \cup $ \\[0.5ex]
\phantom{$\mathcal{E}=$ }$ \{e_n=\{n,1,2,5\}\}$.

\bigskip
{\em Example.} For $n = 13$ we have $\cH = ( V, \cE)$ with $V = \{ 1, 2, \ldots, 13 \}$ and \\[1ex]
$\mathcal{E}=\{ \{1,2,3,4\},\{3,4,5,6\},\{5,6,7,8\},\{7,8,9,10\},\{9,10,11,12\},\{11,12,13,1\},$ \\[0.5ex]
\phantom{$\mathcal{E}= \{$}$ \{2,3,8,9\},\{4,5,10,11\},\{6,7,12,13\},$ \\[0.5ex]
\phantom{$\mathcal{E}= \{$}$ \{13,1,2,5\}\}$.
%\pagebreak
% $\cH = ( V, \cE)$ is shown in Fig. 3.

%\vspace{-12mm}
\begin{figure}[h]
	\centering
	\includegraphics[width=11cm]{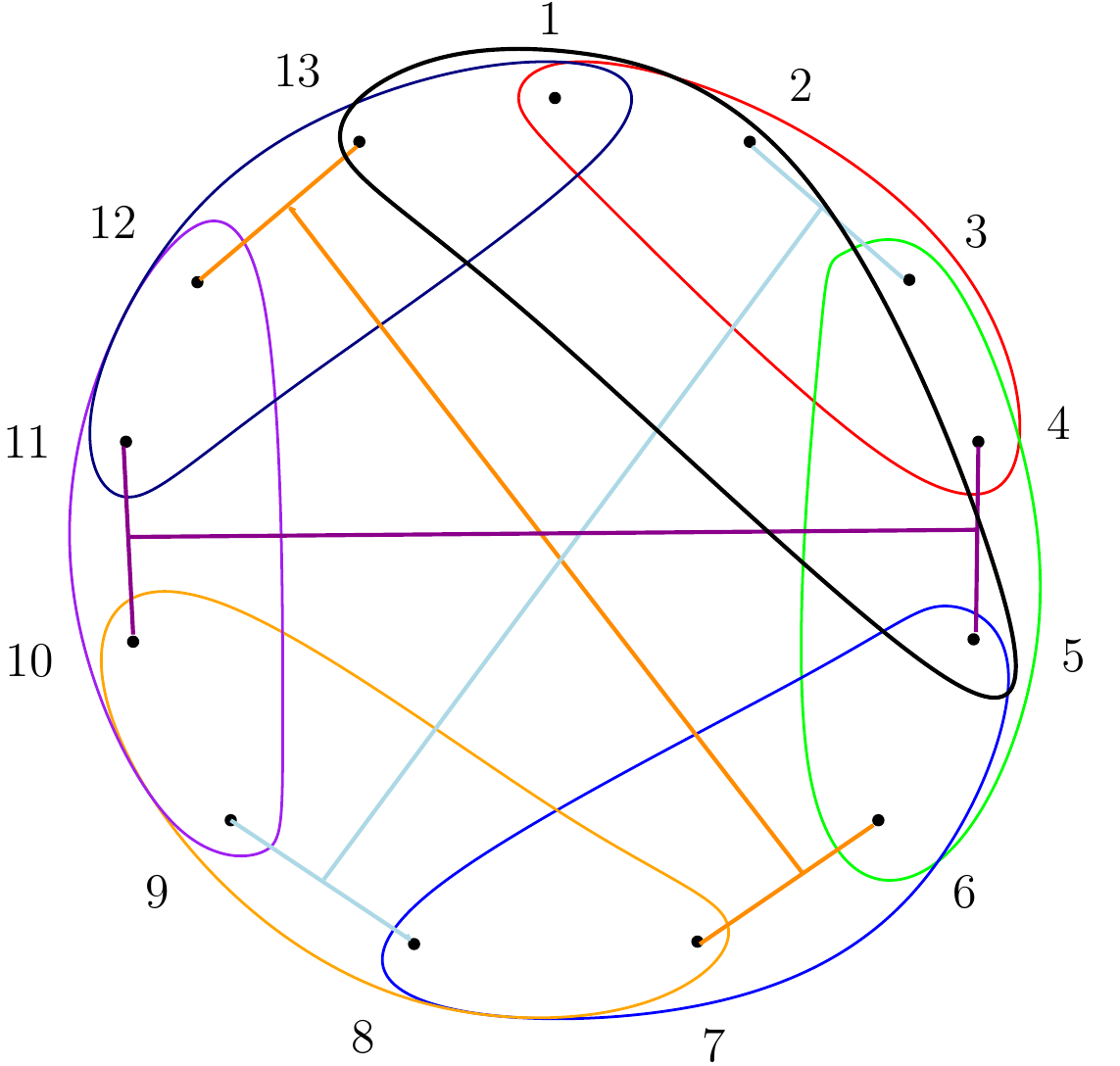}
	\caption{A $4$-uniform hypergraph $\mathcal{H}=(V,\mathcal{E})$ with $EI(\mathcal{H})=C_{13}.$}
	\label{img:Bsp13Knoten}
\end{figure}
%\vspace{1cm}
%\pagebreak
\noindent
{\ul{\em Case 2: $n \equiv 2 \mod 4$.}}

\smallskip

As in Case 0 (and in contrast to Case 1), in our construction only $(4)$-hyperedges and $(2,2)$-hyperedges will be used.

\smallskip

\noindent
$\mathcal{E}=\{e_i=\{i,i+1,i+2,i+3\} \text{ }| \text{ }i \in \{1,3,...,n-1 \}\}\quad \cup $ \\[0.5ex]
\phantom{$\mathcal{E}=$ }$ \{e_i=\{i,i+1,\frac{n}{2}+i-1,\frac{n}{2}+i\} \text{ }| \text{ }i \in \{2,4,...,	
	\frac{n}{2}-1\}\} \quad \cup $ \\[0.5ex]
\phantom{$\mathcal{E}=$ }$ \{e_n=\{n,1,\frac{n}{2}+1,\frac{n}{2}+2\}\}$.

\smallskip

 But now two of the $(2,2)$-hyperedges, namely $e_i=\{i,i+1,\frac{n}{2}+i-1,\frac{n}{2}+i\}$ (for $i = 2$), $ e_n=\{n,1,\frac{n}{2}+1,\frac{n}{2}+2\}$ as well as the $(4)$-hyperedge $e_i=\{i,i+1,i+2,i+3\}$ (for $i= \frac{n}{2}$) contain the vertices $\frac{n}{2}+1$ and  $\frac{n}{2}+2$.
 In other words, the hyperedge  $\{\frac{n}{2}+1,\frac{n}{2}+2\}$ is "triply-generated" in the edge intersection hypergraph $EI(\mathcal{H})$ -- that way we take account of the 4-uniformity of the hypergraph $\mathcal{H}$.

\smallskip
%\pagebreak
{\em Example.} For $n = 14$ we have $\cH = ( V, \cE)$ with $V = \{ 1, 2, \ldots, 14 \}$ and \\[1ex]
$\mathcal{E}=\{\{1,2,3,4\},\{3,4,5,6\},\{5,6,7,8\},\{7,8,9,10\},\{9,10,11,12\},\{11,12,13,14\}, \{13,14,1,2\},$ \\[0.5ex]
\phantom{$\mathcal{E}= \{$}$ \{2,3,8,9\},\{4,5,10,11\},\{6,7,12,13\},$ \\[0.5ex]
\phantom{$\mathcal{E}= \{$}$ \{14,1,8,9\}\}.$

\pagebreak
$\cH = ( V, \cE)$ can be seen in Fig. 4; the hyperedge being "triply-generated" is the hyperedge $\{ 8, 9 \} = \{7,8,9,10\} \, \cap \, \{2,3,8,9\} \, = \, \{7,8,9,10\} \, \cap \, \{14,1,8,9\} \, = \,  \{2,3,8,9\} \, \cap \, \{14,1,8,9\}$.
\begin{figure}[h]
	\centering
	\includegraphics[width=11cm]{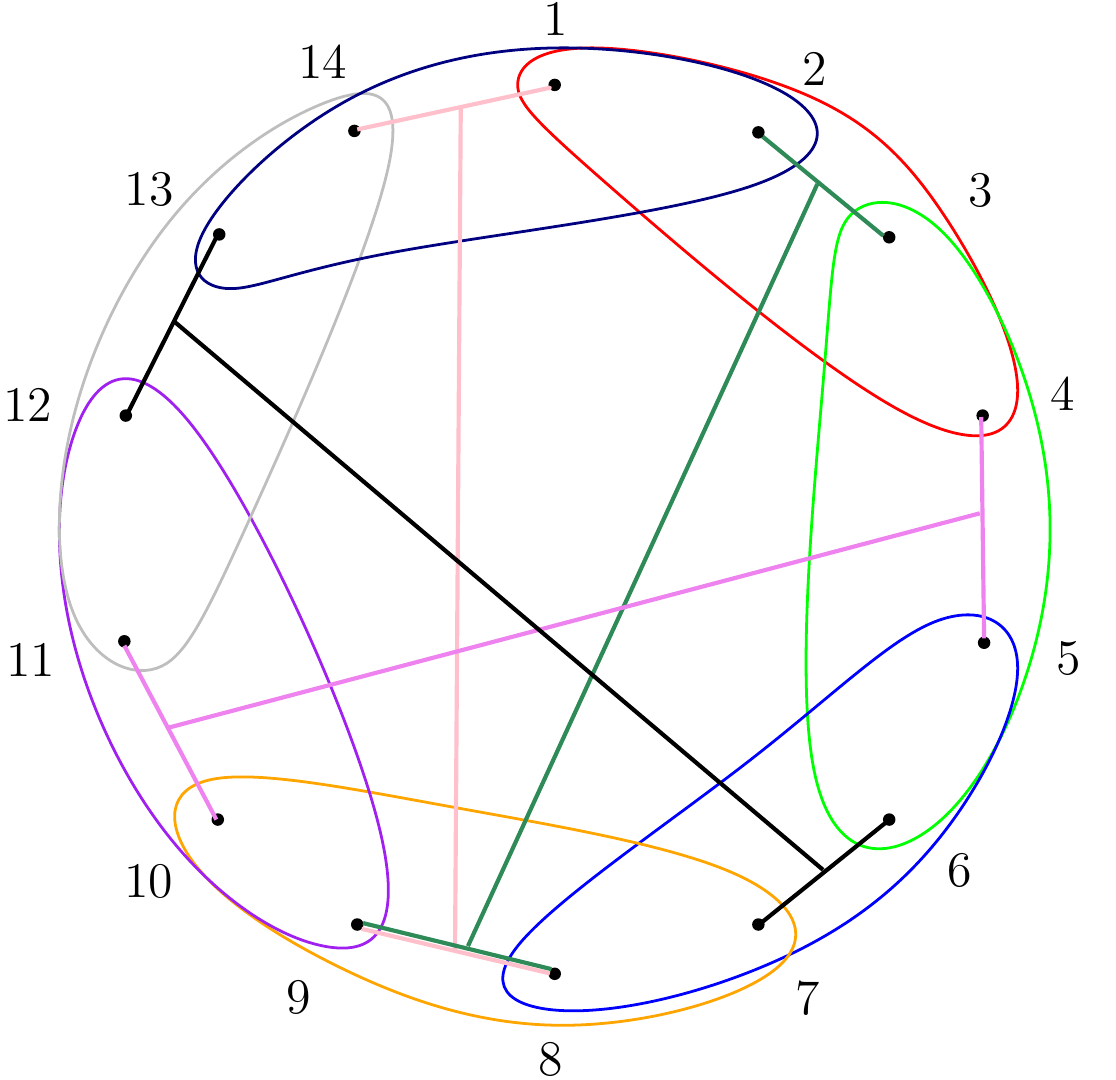}
	\caption{A $4$-uniform hypergraph $\mathcal{H}=(V,\mathcal{E})$ with $EI(\mathcal{H})=C_{14}.$}
	\label{img:Bsp14Knoten}
\end{figure}

%\pagebreak
\noindent
{\ul{\em Case 3: $n \equiv 3 \mod 4$.}}

\smallskip
Also in the last case, we make use of three types of hyperedges, but now the third class of hyperedges consists of two hyperedges instead of one (as in Case 2). This results in some more subcases having to be investigated now (see \cite{Pae}).
At first, here is the set $\cE$ of hyperedges of the hypergraph $\cH$.

\medskip
\noindent
$\mathcal{E}= \{e_i=\{i,i+1,i+2,i+3\} \text{ }| \text{ }i \in \{1,3,...,n-2 \}\}\quad \cup $ \\[0.5ex]
\phantom{$\mathcal{E}=$ }$ \{e_i=\{i,i+1,\frac{n-1}{2}+i-1,\frac{n-1}{2}+i\} \text{ }| \text{ }i \in \{2,4,...,	
	\frac{n-1}{2}-1\}\} \quad \cup $ \\[0.5ex]
\phantom{$\mathcal{E}=$ }$ \{e_{n-1}=\{n-1,n,\frac{n+1}{2},\frac{n+1}{2}+1\},  e_n=\{n,1,2,5\}\}$.

\bigskip
This set of hyperedges $\cE$ has two typical features we have seen above in Case 1 and Case 2, respectively (see also our example below).
\begin{itemize}
\item The first one is the existence of a $(3,1)$-hyperedge ($\{15,1,2,5\}$ in the example).
\item The second one is a "triply-generated" hyperedge in the resulting edge intersection hypergraph ($\{ 8, 9 \} = \{7,8,9,10\} \, \cap \, \{2,3,8,9\} \, = \, \{7,8,9,10\} \, \cap \, \{14,15,8,9\} \, = \,  \{2,3,8,9\} \, \cap \, \{14,15,8,9\}$ in the example). \hqed
    \end{itemize}

\medskip
{\em Example.} For $n = 15$ we have $\cH = ( V, \cE)$ with $V = \{ 1, 2, \ldots, 15 \}$ and \\[1ex]
$\mathcal{E}=\{ \{1,2,3,4\},\{3,4,5,6\},\{5,6,7,8\},\{7,8,9,10\},\{9,10,11,12\},\{11,12,13,14\}, \{13,14,15,1\},$ \\[0.5ex]
\phantom{$\mathcal{E}= \{$}$ \{2,3,8,9\},\{4,5,10,11\},\{6,7,12,13\},$ \\[0.5ex]
\phantom{$\mathcal{E}= \{$}$ \{14,15,8,9\},\{15,1,2,5\}\}.$ \hfill (see Fig. 5)
%\pagebreak
\begin{figure}[h]
	\centering
	\includegraphics[width=11cm]{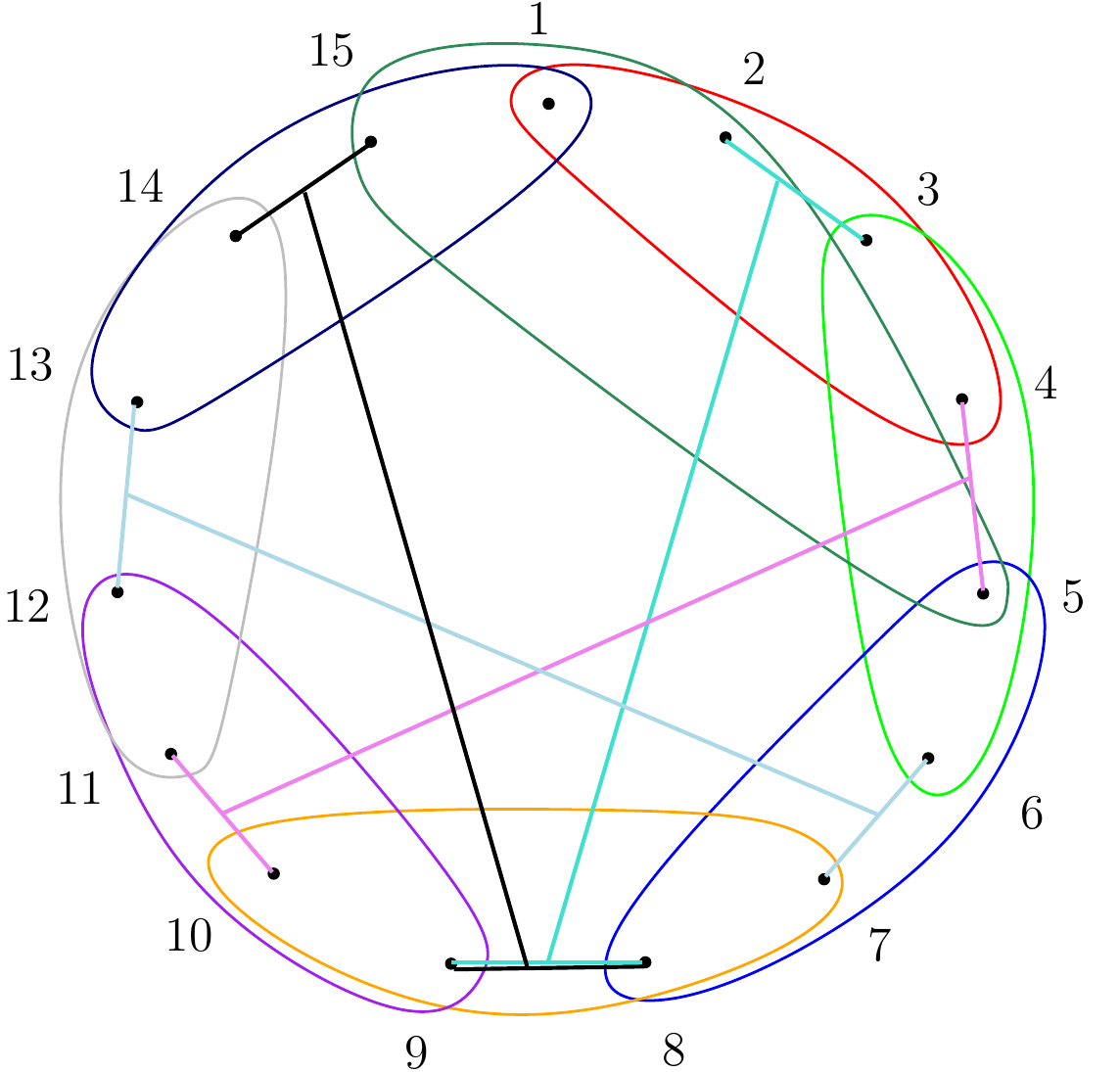}
	\caption{A $4$-uniform hypergraph $\mathcal{H}=(V,\mathcal{E})$ with $EI(\mathcal{H})=C_{15}.$}
	\label{img:Bsp15Knoten}
\end{figure}

\section{The 5-uniform case}

\subsection{$(3,2)$-hyperedges and $| \cE | = \lceil \frac{2 n}{3} \rceil$}

In the 5-uniform case, $(3,2)$-hyperedges are "good" ones in the following sense. They can contribute to generate three edges in $EI(\cH) = C_n$ (they provide three half-edges) and they are "easier to handle" than hyperedges containing a 4-section $(i, i+1, i+2, i+3)$ or a 5-section $(i, i+1, i+2, i+3, i+4)$. To see this, let us have a look at the "middle vertices" $i+1, i+2$ of a 4-section or a 5-section. In order to generate the edge $\{ i+1, i+2 \}$ in $EI(\cH) = C_n$, it is compelling to have a hyperedge with the 2-section $\{ i+1, i+2 \}$ in $\cH$ -- that can be an annoying restriction. So, in a first step, we deal with $(3,2)$-hyperedges.

\begin{theorem}[\cite{Pae}]
\label{TheoMIN_(3,2)}
Let $\mathcal{H}=(V,\mathcal{E})$ be $5$-uniform with $EI(\mathcal{H})=C_n$, such that all hyperedges in $\cH$ are $(3,2)$-hyperedges. Then $| \cE | \ge \frac{2 n}{3}$.
\end{theorem}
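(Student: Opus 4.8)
The plan is to do a straightforward counting argument on incidences between hyperedges of $\cH$ and edges of $C_n$, analogous to the proof of Theorem~\ref{TheoOPT3REG} but adapted to the restricted hyperedge structure. Each $(3,2)$-hyperedge $e \in \cE$ consists of a $3$-section $(i,i+1,i+2)$ and a disjoint $2$-section $(j,j+1)$; such a hyperedge contains exactly three $2$-subsets of consecutive vertices, namely $\{i,i+1\}$, $\{i+1,i+2\}$ and $\{j,j+1\}$, so $e$ can serve as a half-edge for at most three edges of $C_n$. Conversely, every edge of $C_n = EI(\cH)$ must be generated, hence must receive at least two half-edges from $\cE$. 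Let me set up the bipartite incidence count accordingly.

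First I would define, for each $e \in \cE$, the set $S(e) \subseteq E(C_n)$ of cycle-edges to which $e$ can contribute, i.e. those $\{v,v+1\} \subseteq e$; as noted, $|S(e)| \le 3$ since $e$ is a $(3,2)$-hyperedge. Then I would count the set of pairs $I = \{(e,f) : e \in \cE,\ f \in E(C_n),\ f \subseteq e\}$ in two ways. On one hand, $|I| = \sum_{e \in \cE} |S(e)| \le 3|\cE|$. On the other hand, each $f \in E(C_n)$ lies in $\cE(EI(\cH))$, so by the definition of $EI$ there exist at least two distinct hyperedges $e_1, e_2 \in \cE$ with $f = e_1 \cap e_2 \subseteq e_1, e_2$; thus $f$ contributes at least $2$ pairs to $I$, giving $|I| \ge 2|E(C_n)| = 2n$. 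Combining, $3|\cE| \ge 2n$, i.e. $|\cE| \ge \frac{2n}{3}$, which is the claim.

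The one point that needs a little care — and the only place where the structural hypothesis is genuinely used — is the bound $|S(e)| \le 3$. A general $5$-element hyperedge could contain up to four consecutive-pair subsets (if it were a $5$-section, i.e. $\{i,i+1,i+2,i+3,i+4\}$, it contains the four edges $\{i,i+1\},\dots,\{i+3,i+4\}$), and then the argument would only yield $|\cE| \ge \frac{2n}{4}$. Restricting to $(3,2)$-hyperedges is exactly what forces $|S(e)| \le 3$: a $3$-section $(i,i+1,i+2)$ accounts for two consecutive pairs and a $2$-section $(j,j+1)$ for one more, and these are all the consecutive pairs inside $e$ because the two sections are separated by non-edge-vertices on both sides (by the definition of a section, $i-1,i+3 \notin e$ and $j-1,j+2 \notin e$, provided $n$ is large enough that the sections do not wrap around and merge — one should note $n \ge 18$ or simply that a $(3,2)$-hyperedge by definition has two genuinely separate sections). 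I do not expect any serious obstacle; the whole proof is a two-line double count once $|S(e)| \le 3$ is observed, and the result is presumably shown to be sharp elsewhere in the paper by the explicit constructions using only $(3,2)$-hyperedges.
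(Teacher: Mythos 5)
Your proposal is correct and is essentially the paper's own proof: the paper likewise observes that a $(3,2)$-hyperedge can contribute to at most three edges of $C_n$ while each edge needs at least two contributing hyperedges, and concludes $|\cE|\ge \frac{2n}{3}$ by the same double count. Your explicit justification of $|S(e)|\le 3$ via the section structure is a welcome elaboration of a step the paper leaves implicit, but it is not a different argument.
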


\begin{proof}
Considering an arbitrarily chosen $(3,2)$-hyperedge $e$, in $EI(\cH) = C_n$ this hyperedge can contribute to at most 3 edges of $C_n$. Every edge in $EI(\cH)$ is the intersection of at least 2 of the  $(3,2)$-hyperedges. Therefore, we obtain
$|\mathcal{E}|\geq\lceil\frac{2 n}{3}\rceil\geq\frac{2 n}{3}$.   \hqed
\end{proof}

For $n \ge 18$ and $n \equiv 0 \mod 3$ it can be proved constructively that the given lower bound for $| \cE |$ is sharp.

For $n \equiv 1 \mod 3$ and $n \equiv 2 \mod 3$, respectively, we modify this construction in order to obtain the existence of a $5$-uniform hypergraph $\mathcal{H}=(V,\mathcal{E})$ with $EI(\mathcal{H})=C_n$ and $| \cE | = \lceil \frac{2 n}{3} \rceil$, too. In both cases of the modification, additional to the two main types of $(3,2)$-hyperedges this leads to nine extra hyperedges which are needed.  %For $n \equiv 1 \mod 3$ and $n \equiv 2 \mod 3$,
Besides (3,2)-hyperedges, one of the extra hyperedges is a (5)-hyperedge. Moreover, for $n \equiv 1 \mod 3$ we need another extra hyperedge being a (2,2,1)-hyperedge.

At first, we consider each of the three cases in a separate lemma.
Let us mention that, as in the previous Section, the detailed verification of $\cE(EI(\cH))= E(C_n)$ would transcend the limitations of the present paper.
%, see our remarks to the proofs of Lemma \ref{Lemma_(3,2)_1_mod_3} and Lemma \ref{Lemma_(3,2)_2_mod_3}.
Hence, for shortness we construct only $\cE$ as the basic step to show the existence of the wanted hypergraphs $\mathcal{H}=(V,\mathcal{E})$ with $EI(\mathcal{H})=C_n$. The proof in all details can be found in \cite{Pae}.

Moreover, for each of our constructions, an example with minimum number $n$ of vertices will be given.

\begin{lemma}[\cite{Pae}]
\label{Lemma_(3,2)_0_mod_3}
Let $n \ge 18$ and $n \equiv 0 \mod 3$. Then there exists a $5$-uniform hypergraph $\mathcal{H}=(V,\mathcal{E})$ with $EI(\mathcal{H})=C_n$ and $| \cE | = \frac{2 n}{3}$ such that all hyperedges in $\cH$ are $(3,2)$-hyperedges.
\end{lemma}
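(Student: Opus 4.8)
The task is to exhibit, for each $n\ge 18$ with $n\equiv 0\bmod 3$, a $5$-uniform hypergraph $\cH=(V,\cE)$ on $V=\{1,\dots,n\}$ all of whose hyperedges are $(3,2)$-hyperedges, with $|\cE|=\frac{2n}{3}$ and $EI(\cH)=C_n$. Since $\frac{2n}{3}=\mu^k_n$ would require $3$-regularity by Theorem~\ref{TheoOPT3REG} (here $5\cdot\frac{2n}{3}\ne 3n$, so in fact the hypergraph is \emph{not} $3$-regular; the bound $\lceil\frac{2n}{3}\rceil$ comes instead from Theorem~\ref{TheoMIN_(3,2)} and is achieved), the plan is purely constructive in the spirit of Section~3: write down $\cE$ explicitly as a union of two periodic families of $(3,2)$-hyperedges, check the two inclusions $E(C_n)\subseteq\cE(EI(\cH))$ and $\cE(EI(\cH))\subseteq E(C_n)$, and count.

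\textbf{The construction.} I would split the $n=3m$ vertices into $m$ consecutive blocks of length $3$, say $B_t=\{3t+1,3t+2,3t+3\}$ for $t=0,\dots,m-1$. The $3$-sections will be exactly these blocks (or their cyclic shifts), and the $2$-sections will be the ``joints'' $\{3t,3t+1\}$ straddling consecutive blocks. A natural first attempt is
\[
\cE=\{\,\{3t+1,3t+2,3t+3\}\cup\{3s,3s+1\}\ :\ t,s\in\{0,\dots,m-1\}\,\}\text{ suitably paired,}
\]
i.e. pair each block with one joint so that every joint is covered twice (giving the joint-edges $\{3t,3t+1\}$ of $C_n$ as intersections of two $(3,2)$-hyperedges) and every in-block edge $\{3t+1,3t+2\}$, $\{3t+2,3t+3\}$ is likewise covered twice (as the intersection of two hyperedges sharing the block $B_t$ as their $3$-section). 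Concretely one takes two interleaved arithmetic progressions of starting indices, as in the $n\equiv 0\bmod 4$ case of Theorem~\ref{TheoMain_4U}: roughly $\cE=\{\{i,i+1,i+2\}\cup\{j,j+1\}\}$ where the pairs $(i,j)$ run through $m$ values making block $B_t$ appear in exactly two hyperedges and joint $\{3t,3t+1\}$ appear in exactly two hyperedges. Then $|\cE|=\frac{2\cdot(\#\text{blocks})\cdot 3}{?}$—one arranges the pairing so that there are $m$ hyperedges carrying each block role and $m-?$ carrying joints; balancing the half-edge count ($2\cdot\frac{n\text{ edges of }C_n}{}=2n$ half-edges needed, each hyperedge supplying $3$) forces $|\cE|=\frac{2n}{3}$, which is an integer since $3\mid n$. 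I would then display the explicit example for $n=18$ (nine $2$-sections wide) as Figure, exactly as done for $n=11,12,13,14,15$ above.

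\textbf{The hard part.} As the authors repeatedly stress, showing $E(C_n)\subseteq\cE(EI(\cH))$ is easy once the pairing is right (each needed edge is visibly a double-covered $2$- or $3$-subset); the real obstacle is the reverse inclusion $\cE(EI(\cH))\subseteq E(C_n)$: one must verify that \emph{no two} hyperedges of $\cH$ intersect in a set of size $\ge 2$ other than an edge of $C_n$. Because every hyperedge is a $(3,2)$-hyperedge, a spurious intersection of size $\ge 2$ could arise in several ways—two $2$-sections from different hyperedges landing on the same joint, a $2$-section meeting the interior of another hyperedge's $3$-section, two $3$-sections overlapping in $2$ or $3$ vertices, or a $3$-section meeting a $2$-section plus a stray coincidence modulo $n$. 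The plan is to organize this as a finite case distinction on the relative offsets of the two progressions modulo $n$ and modulo $3$ (a handful of subcases, growing only mildly with the residue class of $n$ beyond $18$), checking in each that the intersection is empty or a single vertex or a legitimate cycle edge. I would then remark that the full case analysis is carried out in~\cite{Pae} and here only state the construction and the $n=18$ example, matching the paper's established style; the counting identity $|\cE|=\frac{2n}{3}$ and the lower bound from Theorem~\ref{TheoMIN_(3,2)} together give the claimed equality.
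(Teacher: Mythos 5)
Your overall framework (two periodic families of $(3,2)$-hyperedges, a half-edge count forcing $|\cE|=\frac{2n}{3}$, deferring the reverse inclusion $\cE(EI(\cH))\subseteq E(C_n)$ to a case distinction) matches the paper's, but the concrete construction you sketch is broken at its core. You propose that the $3$-sections be exactly the blocks $B_t=\{3t+1,3t+2,3t+3\}$ of a \emph{single} partition of $V$, with each block serving as the $3$-section of exactly two hyperedges, and you claim the in-block edges $\{3t+1,3t+2\}$ and $\{3t+2,3t+3\}$ are then generated ``as the intersection of two hyperedges sharing the block $B_t$ as their $3$-section.'' But two distinct hyperedges that both contain $B_t$ intersect in a set containing all three vertices of $B_t$, so $EI(\cH)$ would acquire a hyperedge of cardinality at least $3$ and could not equal the $2$-uniform $C_n$. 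No choice of pairing blocks with joints repairs this; the single-partition layout is unusable, and your hedge about ``two interleaved arithmetic progressions'' does not help as long as the generating mechanism for the in-block edges is two hyperedges with identical $3$-sections.

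The paper's construction avoids exactly this trap: the $3$-sections come from two block partitions offset by one position, namely $\{i,i+1,i+2\}$ for $i\equiv 1\bmod 3$ and for $i\equiv 2\bmod 3$. Consecutive $3$-sections from the two families overlap in exactly two vertices, which generates every third edge of $C_n$ (e.g.\ $\{2,3\}=\{1,2,3,9,10\}\cap\{2,3,4,7,8\}$ for $n=21$); the remaining $\frac{2n}{3}$ edges are each generated by a $3$-section of one hyperedge meeting the $2$-section of another, the $2$-sections being placed at offsets $+8,+9$ and $+5,+6$ respectively so that no pair of hyperedges has an unwanted intersection of size $\ge 2$. This offset choice is also why the generic formula needs $n\ge 21$; the paper treats $n=18$ by a separate ad hoc list of twelve hyperedges, a case your proposal does not isolate. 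Your counting argument correctly explains why $\frac{2n}{3}$ is the target cardinality, but without the staggered $3$-sections the hypergraph you describe does not have $EI(\cH)=C_n$.
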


{\em To the proof.} We have to consider  $n = 18$ separately.

\smallskip
\noindent
{\ul{\em Case 1:  $n = 18$.}}

\smallskip

Let
$\mathcal{H}=(V, \mathcal{E})$ with
$V=\{1,2,...,18\}$ and \\[1ex]
$\mathcal{E}=\{ \{1,2,3,7,8\},\{4,5,6,12,13\},\{7,8,9,13,14\},
\{10,11,12,18,1\},\{13,14,15,1,2\}, $\\[0.5ex]
\phantom{$\mathcal{E}= \{$}$ \{16,17,18,6,7\},$\\[0.5ex]
\phantom{$\mathcal{E}= \{$}$ \{2,3,4,9,10\},\{5,6,7,10,11\},\{8,9,10,15,16\},\{11,12,13,16,17\},\{14,15,16,3,4\}, $\\[0.5ex]
\phantom{$\mathcal{E}= \{$}$ \{17,18,1,4,5\}\}.$

\medskip
\noindent
The hypergraph is drawn in Fig. 6.
\vspace{-2mm}
\begin{figure}[h]
	\centering
	\includegraphics[width=11.5cm]{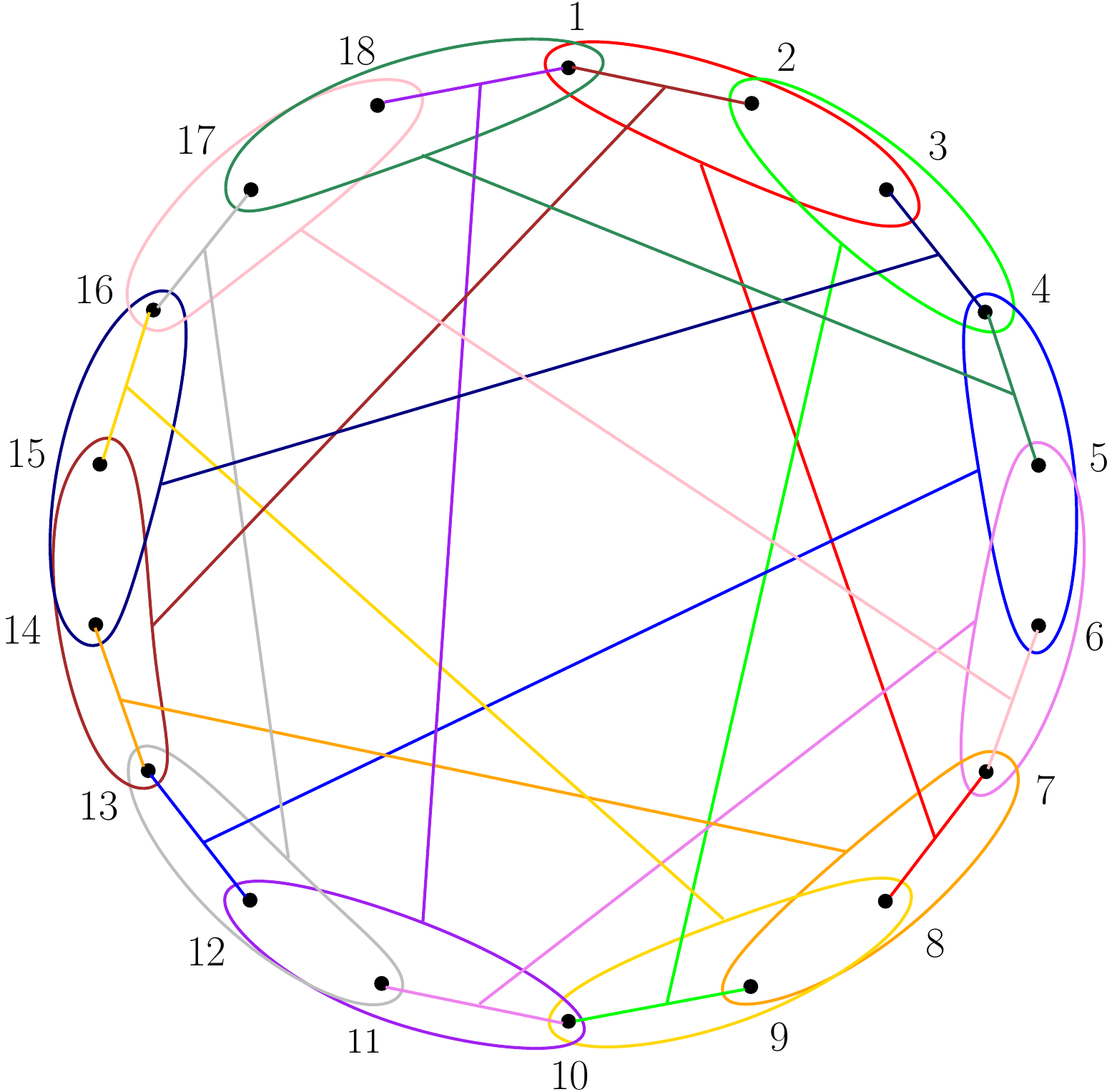}
	\caption{A $5$-uniform hypergraph $\mathcal{H}=(V,\mathcal{E})$ with $EI(\mathcal{H})=C_{18}$.}
	\label{img:Bsp18Knoten}
\end{figure}

\pagebreak
\noindent
{\ul{\em Case 2:  $n \ge 21$.}}

\smallskip

Let
$\mathcal{H}=(V, \mathcal{E})$ with
$V=\{1,2,...,n\}$ and \\[1ex]
$\mathcal{E}=\{ e_i=\{i,i+1,i+2,i+8,i+9\} \text{ }| \text{ }i \in \{1,4,7,...,n-2\}\}\quad \cup $ \\[0.5ex]
\phantom{$\mathcal{E}=$ }$ \{e_i=\{i,i+1,i+2,i+5,i+6\} \text{ }| \text{ }i \in \{2,5,8,...,	
n-1\}\}$. \hqed

\medskip
{\em Example.} For $n = 21$ we have $\cH = ( V, \cE)$ with $V = \{ 1, 2, \ldots, 21 \}$ and \\[1ex]
$\mathcal{E}=\{ \{1,2,3,9,10\},\{4,5,6,12,13\},\{7,8,9,15,16\},\{10,11,12,18,19\},\{13,14,15,21,1\},$\\[0.5ex]
\phantom{$\mathcal{E}= \{$}$ \{16,17,18,3,4\},\{19,20,21,6,7\},$\\[0.5ex]
\phantom{$\mathcal{E}= \{$}$ \{2,3,4,7,8\},\{5,6,7,10,11\},\{8,9,10,13,14\},\{11,12,13,16,17\}, \{14,15,16,19,20\},$\\[0.5ex]
\phantom{$\mathcal{E}= \{$}$ \{17,18,19,1,2\},\{20,21,1,4,5\}\}.$

\medskip
\noindent
Figure 7 shows $\cH = ( V, \cE)$.
\vspace{-8mm}
\begin{figure}[h]
	\centering
	\includegraphics[width=11.5cm]{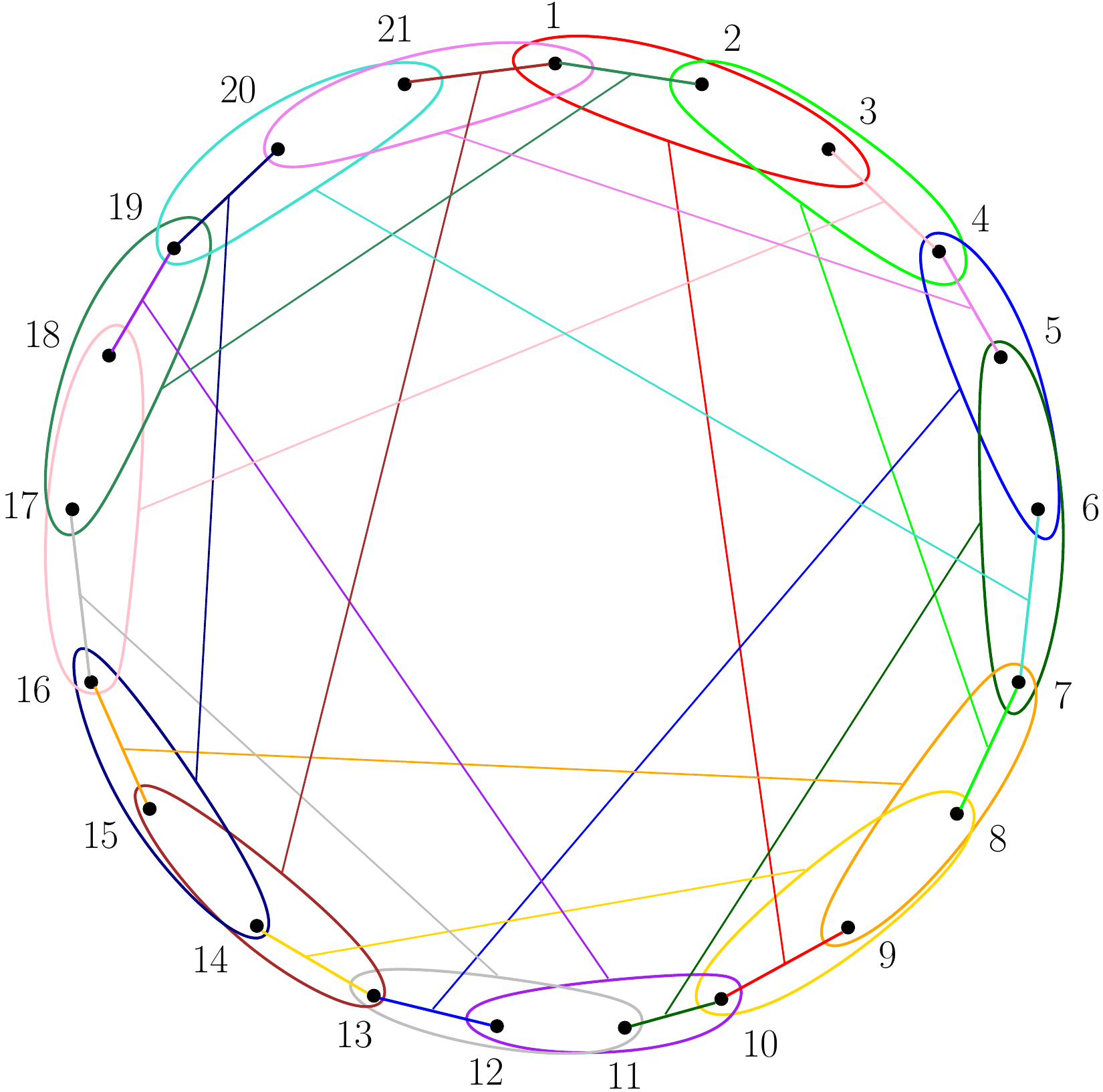}
	\caption{A $5$-uniform hypergraph $\mathcal{H}=(V,\mathcal{E})$ with $EI(\mathcal{H})=C_{21}$.}
	\label{img:Bsp21Knoten}
\end{figure}
\vspace{-5mm}
\begin{lemma}[\cite{Pae}]
\label{Lemma_(3,2)_1_mod_3}
Let $n \ge 19$ and $n \equiv 1 \mod 3$. Then there exists a $5$-uniform hypergraph $\mathcal{H}=(V,\mathcal{E})$ with $EI(\mathcal{H})=C_n$, $| \cE | = \frac{2}{3}(n-1) + 1$, such that $\cH$ contains one $(5)$-hyperedge, one $(2,2,1)$-hyperedge and  the remaining hyperedges in $\cH$ are $(3,2)$-hyperedges.
\end{lemma}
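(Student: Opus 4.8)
The strategy follows the same blueprint as Lemma~\ref{Lemma_(3,2)_0_mod_3}: write $n = 3m+1$ and start from the ``generic'' pair of periodic families of $(3,2)$-hyperedges that worked in the $n\equiv 0\bmod 3$ case, namely the hyperedges $\{i,i+1,i+2,i+8,i+9\}$ and $\{i,i+1,i+2,i+5,i+6\}$ running over appropriate residues mod $3$. Because $n$ is no longer a multiple of $3$, these two arithmetic progressions of starting indices do not close up cleanly as one wraps around $C_n$; there is a ``seam'' of a bounded number of consecutive edges of $C_n$ (independent of $n$, for $n$ large enough) that the generic hyperedges fail to generate correctly, or generate with the wrong multiplicity. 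The whole content of the construction is to repair this seam with a fixed list of nine extra hyperedges -- among them one $(5)$-hyperedge $\{j,j+1,j+2,j+3,j+4\}$ and one $(2,2,1)$-hyperedge -- while keeping $|\cE| = \tfrac23(n-1)+1$. The arithmetic bookkeeping is: the generic part contributes $\tfrac23(n-1)$ hyperedges (two per three vertices over the ``clean'' range), and the nine extra hyperedges must be arranged so that they free up/replace exactly eight generic ones, leaving a net surplus of one; this is exactly why the count lands on $\tfrac23(n-1)+1$ rather than $\lceil \tfrac{2n}{3}\rceil = \tfrac13(2n+1)$, and matches the $(5)$-hyperedge being forced to cover a 5-section whose three interior edges each need a matching 2-section elsewhere.

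\textbf{Key steps, in order.} First I would fix the explicit $\cE$ for a generic large $n\equiv 1\bmod 3$: take $e_i = \{i,i+1,i+2,i+8,i+9\}$ for $i$ in a suitable residue class and range, $e_i = \{i,i+1,i+2,i+5,i+6\}$ for the complementary class and range, and append the nine seam hyperedges with concrete offsets (as in the $n=18$ and $n=21$ displays, the seam hyperedges are given by closed formulas in $n$ plus a handful of truly constant ones). Second, I would verify $E(C_n)\subseteq \cE(EI(\cH))$: for each edge $\{t,t+1\}$ of $C_n$ exhibit two hyperedges of $\cH$ whose intersection is exactly $\{t,t+1\}$ -- in the generic range this is the periodic pattern already validated for $n\equiv 0$, and near the seam it is a finite check against the nine extra hyperedges. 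Third -- and this is where essentially all the work lives -- I would verify $\cE(EI(\cH))\subseteq E(C_n)$: take any two distinct hyperedges $e,e'\in\cE$, show $|e\cap e'|\le 2$, and when $|e\cap e'| = 2$ show the intersection is a genuine edge $\{t,t+1\}$ of $C_n$ rather than a ``chord'' like $\{t,t+2\}$ or a disconnected pair. Because every hyperedge is a union of an $s$-section and a $(5-s)$-section with $s\in\{1,2,3,4,5\}$, intersecting two such objects on a cycle is a matter of comparing the positions of a bounded number of short arcs; the periodicity makes the generic--generic and generic--seam comparisons uniform in $n$, and only the finitely many seam--seam comparisons are genuinely ad hoc.

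\textbf{Main obstacle.} The hard part is precisely the inclusion $\cE(EI(\cH))\subseteq E(C_n)$ at the seam: the $(5)$-hyperedge and the $(2,2,1)$-hyperedge are structurally unlike the bulk $(3,2)$-hyperedges, so their intersections with the nearby generic hyperedges must be checked one by one, and one must ensure no pair of seam hyperedges accidentally meets in a chord of $C_n$ or in a $2$-set that is already triply generated in a way that conflicts with $EI(\cH) = C_n$ (recall from Theorem~\ref{TheoMain_4U}, Case~2 that controlled multiple generation is allowed, but here it must not create an extra edge). This is the ``detailed case distinction'' the authors defer to \cite{Pae}; in the present write-up it is handled exactly as in Lemma~\ref{Lemma_(3,2)_0_mod_3}, by pinning down $\cE$ precisely, stating $|\cE| = \tfrac23(n-1)+1$ by a one-line count, illustrating the minimal case $n=19$ with an explicit hypergraph and figure, and citing \cite{Pae} for the full verification that the listed $\cE$ indeed satisfies $EI(\cH)=C_n$. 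A secondary point to get right is the lower-bound side: since one hyperedge is a $(5)$-hyperedge contributing only one half-edge (its single $2$-section among the three interior edges of the $5$-section still needs partners) and the $(2,2,1)$-hyperedge contributes at most two, the count $\tfrac23(n-1)+1$ must be shown to be forced given that all but these two hyperedges are $(3,2)$-hyperedges -- this is the analogue of Theorem~\ref{TheoMIN_(3,2)} adjusted by the two exceptional hyperedges, and is again a short averaging argument over $\sum_v d_\cH(v) = 5|\cE| = 3n + O(1)$.
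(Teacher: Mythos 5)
Your plan matches the paper's approach exactly: two periodic families of $(3,2)$-hyperedges ($\{i,i+1,i+2,i+8,i+9\}$ and $\{i,i+1,i+2,i+5,i+6\}$) truncated short of the wrap-around, plus nine explicit ``seam'' hyperedges including one $(5)$-hyperedge and one $(2,2,1)$-hyperedge, with the heavy verification $\cE(EI(\cH))\subseteq E(C_n)$ deferred to a case analysis. However, for a constructive existence lemma the construction \emph{is} the content, and you never actually write it down: ``append the nine seam hyperedges with concrete offsets'' is a promissory note, not a proof. The paper's entire proof consists of the concrete list (e.g.\ $\{n-12,n-11,n-10,n-6,n-5\}$, \dots, $\{n-2,n-1,n,1,2\}$ as the $(5)$-hyperedge, and $\{4,5,9,n-1,n\}$ as the $(2,2,1)$-hyperedge), and finding offsets that avoid chords and spurious intersections is precisely the nontrivial part; without them there is nothing to verify. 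So the proposal has the right architecture but a genuine gap where the theorem's substance should be.

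Two smaller errors in your bookkeeping. First, you assert that the count lands on $\tfrac23(n-1)+1$ ``rather than'' $\lceil \tfrac{2n}{3}\rceil = \tfrac13(2n+1)$ --- but for $n\equiv 1 \bmod 3$ these two quantities are \emph{equal}: $\tfrac23(n-1)+1 = \tfrac{2n+1}{3} = \lceil\tfrac{2n}{3}\rceil$ (this is exactly why Theorem~\ref{Theorem_(3,2)} can state $|\cE|=\lceil\tfrac{2n}{3}\rceil$ uniformly). Second, in your closing paragraph you claim a $(5)$-hyperedge contributes ``only one half-edge''; a $5$-section $(i,\dots,i+4)$ contains four consecutive pairs and can provide up to four half-edges, as the paper notes in Subsection~4.2. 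That whole lower-bound discussion is also beside the point: the lemma asserts only existence with the stated cardinality, not that this cardinality is forced for hypergraphs of the stated type.
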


{\em To the proof.}
Let
$\mathcal{H}=(V, \mathcal{E})$ with
$V=\{1,2,...,n\}$ and \\[1ex]
$\mathcal{E}=\{e_i=\{i,i+1,i+2,i+8,i+9\} \text{ }| \text{ }i \in \{1,4,7,...,n-15\}\} \quad \cup $\\[0.5ex]
\phantom{$\mathcal{E}=$ }$ \{e_i=\{i,i+1,i+2,i+5,i+6\} \text{ }| \text{ }i \in \{2,5,8,...,	
n-14\}\} \quad \cup $ \\[0.5ex]
\phantom{$\mathcal{E}=$ }$\{\{n-12,n-11,n-10,n-6,n-5\}, $ \\[0.5ex]
\phantom{$\mathcal{E}=$ }$\hspace{2,15mm}\{n-11,n-10,n-9,n-4,n-3\}, $ \\[0.5ex]
\phantom{$\mathcal{E}=$ }$\hspace{2,15mm}\{n-9,n-8,n-7,n-1,n\}, $ \\[0.5ex]
\phantom{$\mathcal{E}=$ }$\hspace{2,15mm}\{n-8,n-7,n-6,n-3,n-2\},$ \\[0.5ex]
\phantom{$\mathcal{E}=$ }$\hspace{2,15mm}\{n-6,n-5,n-4,n,1\},$ \\[0.5ex]
\phantom{$\mathcal{E}=$ }$\hspace{2,15mm}\{n-5,n-4,n-3,3,4\},$ \\[0.5ex]
\phantom{$\mathcal{E}=$ }$\hspace{2,15mm}\{n-3,n-2,n-1,6,7\},$ \\[0.5ex]
\phantom{$\mathcal{E}=$ }$\hspace{2,15mm}\{n-2,n-1,n,1,2\},$ \\[0.5ex]
\phantom{$\mathcal{E}=$ }$\hspace{2,15mm}\{4,5,9,n-1,n\}\}$.

\smallskip
\noindent
Note that -- in comparison to Lemma \ref{Lemma_(3,2)_0_mod_3} -- the nine extra hyperedges require a much more voluminous case distinction (e.g. in \cite{Pae} on a scale of  about 6 pages).\hqed

\medskip
{\em Example.} For $n = 19$ we have $\cH = ( V, \cE)$ with $V = \{ 1, 2, \ldots, 19 \}$ and \\[1ex]
$\mathcal{E}= \{ \{1,2,3,9,10\},\{4,5,6,12,13\},$ \\[0.5ex]
\phantom{$\mathcal{E}=$ }$\hspace{2,15mm}\{2,3,4,7,8\},\{5,6,7,10,11\},$ \\[0.5ex]
\phantom{$\mathcal{E}=$ }$\hspace{2,15mm}\{7,8,9,13,14\},\{8,9,10,15,16\},\{10,11,12,18,19\},\{11,12,13,16,17\},
\{13,14,15,19,1\},$ \\[0.5ex]
\phantom{$\mathcal{E}=$ }$\hspace{2,15mm}\{14,15,16,3,4\},\{16,17,18,6,7\},\{17,18,19,1,2\},
\{4,5,9,18,19\}\}$.

\medskip
\noindent
In Figure 8, the (5)-hyperedge $\{ 17, 18, 19, 1, 2 \}$ and the (2,2,1)-hyperedge $\{4,5,9,18,19\}$ is drawn dark green and light blue, respectively.
\begin{figure}[h]
	\centering
	\includegraphics[width=11.5cm]{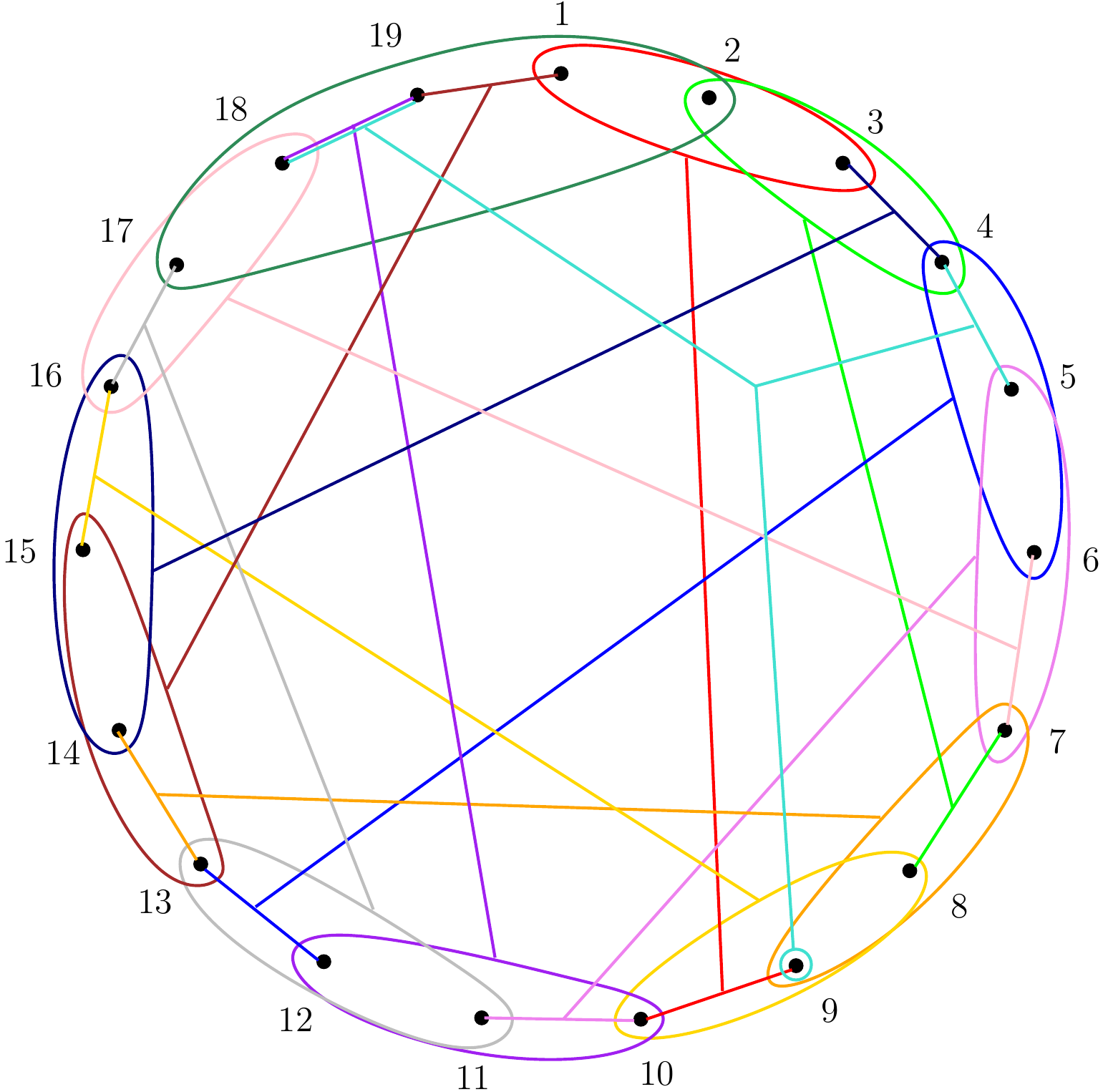}
	\caption{A $5$-uniform hypergraph $\mathcal{H}=(V,\mathcal{E})$ with $EI(\mathcal{H})=C_{19}$.}
	\label{img:Bsp19Knoten}
\end{figure}

\begin{lemma}[\cite{Pae}]
\label{Lemma_(3,2)_2_mod_3}
Let $n \ge 20$ and $n \equiv 2 \mod 3$. Then there exists a $5$-uniform hypergraph $\mathcal{H}=(V,\mathcal{E})$ with $EI(\mathcal{H})=C_n$, $| \cE | = \frac{2}{3}(n-2) + 1$, such that $\cH$ contains exactly one $(5)$-hyperedge and  the remaining hyperedges in $\cH$ are $(3,2)$-hyperedges.
\end{lemma}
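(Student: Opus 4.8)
The plan is to mimic the successful constructions of Lemmas~\ref{Lemma_(3,2)_0_mod_3} and \ref{Lemma_(3,2)_1_mod_3}: take the two ``periodic'' families of $(3,2)$-hyperedges that work cleanly in the $n\equiv 0 \bmod 3$ case, truncate them on a window of a few consecutive vertices, and patch the window with a fixed set of nine extra hyperedges --- here one of them being a $(5)$-hyperedge and the rest being $(3,2)$-hyperedges, so that no $(2,2,1)$-hyperedge is needed (which matches the count $|\cE| = \tfrac23(n-2)+1$, i.e.\ eight patch hyperedges of ``good'' contribution three plus one $(5)$-hyperedge of contribution three, replacing what would have been $\tfrac23\cdot (\text{window length})$ periodic hyperedges). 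First I would write down the two main families
$e_i = \{i,i+1,i+2,i+8,i+9\}$ for $i$ in a suitable residue-class progression, and $e_i = \{i,i+1,i+2,i+5,i+6\}$ for $i$ in the complementary progression, running them from $1$ up to roughly $n-15$; then I would list the nine patch hyperedges, expressed in terms of $n-12, n-11, \ldots, n, 1, 2, \ldots$, chosen (by analogy with Lemma~\ref{Lemma_(3,2)_1_mod_3} but adjusted so the residue $n\equiv 2\bmod 3$ closes up correctly) so that together with the truncated periodic part they generate exactly the $n$ edges $\{i,i+1\}$ of $C_n$ and the $(5)$-hyperedge sits where the $(2,2,1)$-hyperedge stood.

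The verification splits, as the authors note, into the two inclusions. For $E(C_n)\subseteq \cE(EI(\cH))$ I would check that each consecutive pair $\{i,i+1\}$ arises as an intersection of two of the listed hyperedges: in the periodic region each $3$-section $(i,i+1,i+2)$ supplies the pairs $\{i,i+1\}$ and $\{i+1,i+2\}$ via two hyperedges sharing that section, and each $2$-section supplies its own pair, and the offsets $8,9$ versus $5,6$ are arranged exactly so that the two families interlock; in the window the nine patch hyperedges must be checked by hand to cover the remaining $\lesssim 18$ pairs, including the wrap-around pairs near $n,1$. For the harder inclusion $\cE(EI(\cH))\subseteq E(C_n)$ I would argue that no two hyperedges intersect in a set of size $\ge 2$ that is \emph{not} a single consecutive pair --- i.e.\ no ``long'' intersection (size $\ge 3$, or a $2$-section plus an isolated vertex, or two disjoint pairs) occurs. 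In the periodic part this follows from a direct computation with the fixed offsets $\{0,1,2,8,9\}$ and $\{0,1,2,5,6\}$ and the step size between consecutive $i$'s: one just tabulates $e_i \cap e_j$ for the finitely many relevant differences $j-i$ and sees each is at most a single consecutive pair. The interaction of the window hyperedges with each other and with nearby periodic hyperedges is the part that genuinely needs the long case analysis.

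The \textbf{main obstacle} will be exactly that boundary case analysis: ensuring the nine patch hyperedges neither miss a needed edge of $C_n$ nor create a spurious hyperedge in $EI(\cH)$ when intersected with one another or with the last few periodic hyperedges $e_{n-14}, e_{n-17}, \ldots$ and the first few $e_1, e_2, e_4, \ldots$. Because $n\equiv 2\bmod 3$ shifts the phase of the two progressions relative to the window, the offsets inside the patch hyperedges cannot simply be copied from Lemma~\ref{Lemma_(3,2)_1_mod_3}; they have to be re-tuned, and each re-tuning must be rechecked against all overlaps --- this is why the full proof in \cite{Pae} runs to several pages. A secondary, minor obstacle is confirming that $n\ge 20$ is the true threshold: for smaller $n\equiv 2\bmod 3$ the window would wrap onto itself and the periodic part would be empty or too short for the interlocking argument, so those values (and any sporadically realizable small ones) would need separate treatment, which is why the lemma is stated only for $n \ge 20$. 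Once the construction is fixed, the remainder is the routine (if lengthy) bookkeeping of pairwise intersections, which I would organize by grouping hyperedges according to which residue class or which part of the window they lie in and checking each group-pair combination once.
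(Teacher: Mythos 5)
Your plan coincides in outline with the paper's construction: the same two periodic families $e_i=\{i,i+1,i+2,i+8,i+9\}$ and $e_i=\{i,i+1,i+2,i+5,i+6\}$, truncated near the end of the vertex cycle (the paper runs them over $i\in\{1,4,\ldots,n-16\}$ and $i\in\{2,5,\ldots,n-15\}$), patched by nine extra hyperedges of which exactly one is a $(5)$-hyperedge, with the arithmetic $\frac{2(n-14)}{3}+9=\frac{2}{3}(n-2)+1$ coming out right. Your observation that the $(5)$-hyperedge takes over the role played by the $(2,2,1)$-hyperedge in Lemma~\ref{Lemma_(3,2)_1_mod_3}, and your identification of the two inclusions $E(C_n)\subseteq\cE(EI(\cH))$ and $\cE(EI(\cH))\subseteq E(C_n)$ with the second as the hard one, are both exactly what the paper says.

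The genuine gap is that you never write down the nine patch hyperedges. You state that the offsets from the $n\equiv 1\bmod 3$ case ``have to be re-tuned, and each re-tuning must be rechecked,'' but the re-tuning is precisely the nontrivial content of the lemma: the paper's proof consists essentially of the explicit list
$\{n-13,n-12,n-11,n-7,n-6\}$, $\{n-12,n-11,n-10,n-5,n-4\}$, $\{n-10,n-9,n-8,n-1,n\}$, $\{n-9,n-8,n-7,n-4,n-3\}$, $\{n-7,n-6,n-5,n,1\}$, $\{n-6,n-5,n-4,3,4\}$, $\{n-4,n-3,n-2,6,7\}$, $\{n-3,n-2,n-1,4,5\}$, $\{n-2,n-1,n,1,2\}$,
the last being the $(5)$-hyperedge, together with the (deferred to \cite{Pae}) verification that these, interacting with one another and with the boundary members $e_1,e_2,e_4,\ldots$ and $e_{n-16},e_{n-15}$ of the periodic families, cover every edge of $C_n$ and generate nothing else. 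Since not every choice of offsets works --- as you yourself note, the phase shift caused by $n\equiv 2\bmod 3$ means the Lemma~\ref{Lemma_(3,2)_1_mod_3} offsets cannot be copied --- a proof that leaves the patch unspecified has not actually exhibited the claimed hypergraph, and the existence assertion remains unestablished. Everything else in your proposal (the intersection tabulation for the periodic part by bounded differences $j-i$, the grouping of boundary checks, the remark about why $n\ge 20$ is needed) is sound and matches the paper's strategy; only the concrete construction, which is the heart of the matter, is missing.
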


{\em To the proof.}
We take
$\mathcal{H}=(V, \mathcal{E})$ with
$V=\{1,2,...,n\}$ and \\[1ex]
$\mathcal{E}= \{e_i=\{i,i+1,i+2,i+8,i+9\} \text{ }| \text{ }i \in \{1,4,7,...,n-16\}\}\quad \cup $\\[0.5ex]
\phantom{$\mathcal{E}=$ }$\{e_i=\{i,i+1,i+2,i+5,i+6\} \text{ }| \text{ }i \in \{2,5,8,...,	
n-15\}\}\quad \cup $\\[0.5ex]
\phantom{$\mathcal{E}=$ }$\{\{n-13,n-12,n-11,n-7,n-6\},$\\[0.5ex]
\phantom{$\mathcal{E}=$ }$\hspace{2,15mm}\{n-12,n-11,n-10,n-5,n-4\},$\\[0.5ex]
\phantom{$\mathcal{E}=$ }$\hspace{2,15mm}\{n-10,n-9,n-8,n-1,n\},$\\[0.5ex]
\phantom{$\mathcal{E}=$ }$\hspace{2,15mm}\{n-9,n-8,n-7,n-4,n-3\},$\\[0.5ex]
\phantom{$\mathcal{E}=$ }$\hspace{2,15mm}\{n-7,n-6,n-5,n,1\},$\\[0.5ex]
\phantom{$\mathcal{E}=$ }$\hspace{2,15mm}\{n-6,n-5,n-4,3,4\},$\\[0.5ex]
\phantom{$\mathcal{E}=$ }$\hspace{2,15mm}\{n-4,n-3,n-2,6,7\},$\\[0.5ex]
\phantom{$\mathcal{E}=$ }$\hspace{2,15mm}\{n-3,n-2,n-1,4,5\},$\\[0.5ex]
\phantom{$\mathcal{E}=$ }$\hspace{2,15mm}\{n-2,n-1,n,1,2\}\}$.

\medskip
\noindent
Again, the scale of the case distinction  is  about 6 pages in \cite{Pae}.\hqed

\medskip
{\em Example.} For $n = 20$ we have $\cH = ( V, \cE)$ with $V = \{ 1, 2, \ldots, 20 \}$ and \\[1ex]
$\mathcal{E}=\{\{1,2,3,9,10\},\{4,5,6,12,13\},$ \\[0.5ex]
\phantom{$\mathcal{E}=$ }$\hspace{2,15mm}\{2,3,4,7,8\},\{5,6,7,10,11\},$ \\[0.5ex]
\phantom{$\mathcal{E}=$ }$\hspace{2,15mm}\{7,8,9,13,14\},\{8,9,10,15,16\},\{10,11,12,19,20\},\{11,12,13,16,17\},
\{13,14,15,20,1\},$ \\[0.5ex]
\phantom{$\mathcal{E}=$ }$\hspace{2,15mm}\{14,15,16,3,4\},\{16,17,18,6,7\},\{17,18,19,4,5\},
\{18,19,20,1,2\}\}.$

\medskip
\noindent
In Figure 9, the (5)-hyperedge $\{18,19,20,1,2\}$ is drawn light blue.
\begin{figure}[h]
	\centering
	\includegraphics[width=12cm]{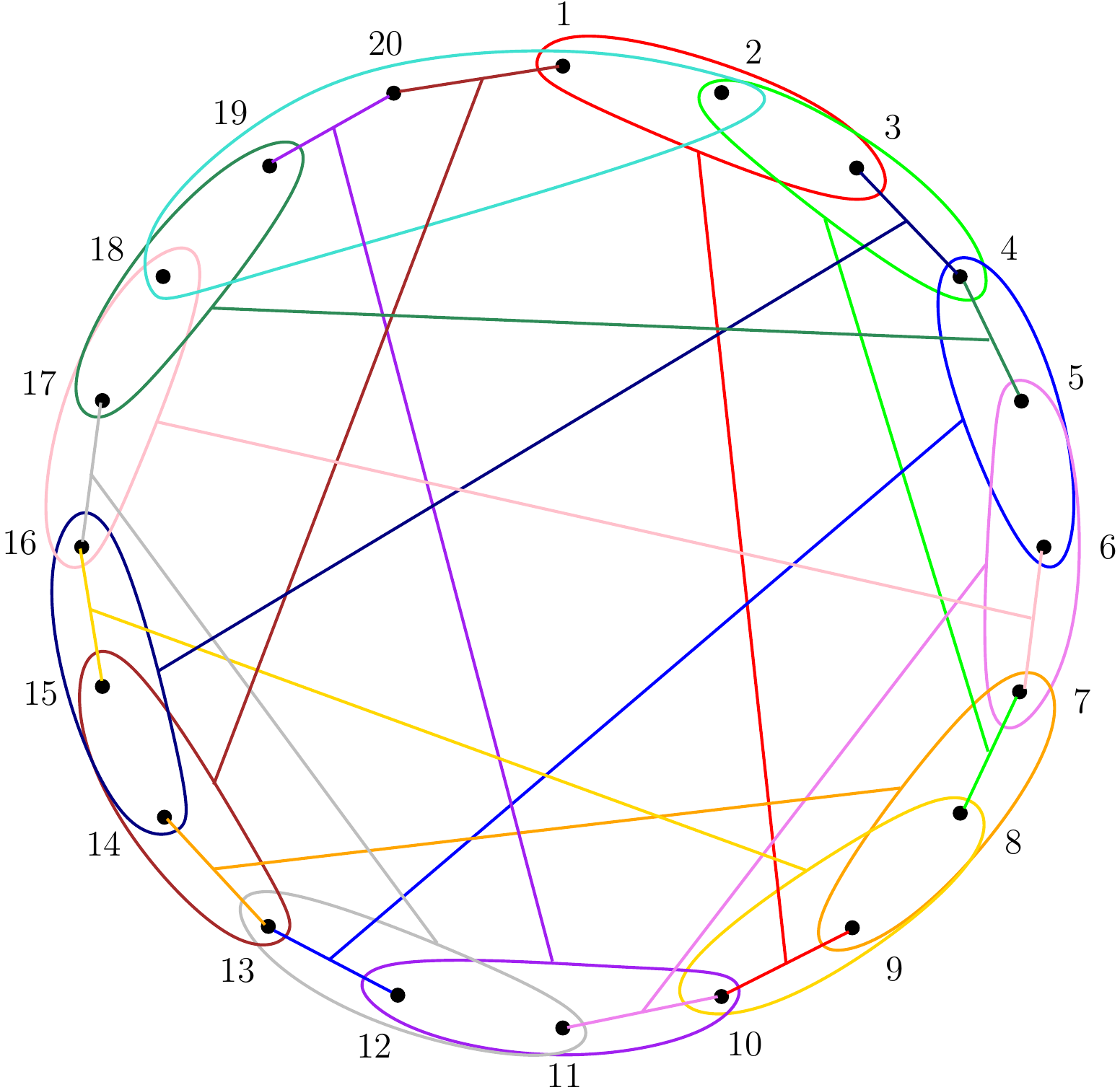}
	\caption{A $5$-uniform hypergraph $\mathcal{H}=(V,\mathcal{E})$ with $EI(\mathcal{H})=C_{20}$.}
	\label{img:Bsp20Knoten}
\end{figure}

To sum up the above Lemmas, we formulate the following theorem.

\begin{theorem}
\label{Theorem_(3,2)}
Let $n \ge 18$. Then there exists a $5$-uniform hypergraph $\mathcal{H}=(V,\mathcal{E})$ with $EI(\mathcal{H})=C_n$ and $| \cE | = \lceil \frac{2 n}{3} \rceil$ such that all but at most two of the hyperedges in $\cH$ are $(3,2)$-hyperedges.
\end{theorem}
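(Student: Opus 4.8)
The plan is to obtain Theorem~\ref{Theorem_(3,2)} as an immediate consequence of the three preceding lemmas, by a straightforward case distinction on the residue of $n$ modulo $3$. First I would recall that $\lceil \frac{2n}{3}\rceil$ takes the value $\frac{2n}{3}$ when $n\equiv 0\bmod 3$, the value $\frac{2(n-1)}{3}+1$ when $n\equiv 1\bmod 3$ (since then $\frac{2n}{3}=\frac{2(n-1)}{3}+\frac{2}{3}$ rounds up by adding $\frac{1}{3}$), and the value $\frac{2(n-2)}{3}+1$ when $n\equiv 2\bmod 3$ (since $\frac{2n}{3}=\frac{2(n-2)}{3}+\frac{4}{3}$ rounds up by adding $\frac{2}{3}$). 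Thus the cardinalities $|\cE|$ asserted in Lemmas~\ref{Lemma_(3,2)_0_mod_3}, \ref{Lemma_(3,2)_1_mod_3} and \ref{Lemma_(3,2)_2_mod_3} are in each case exactly $\lceil\frac{2n}{3}\rceil$.

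Next I would split according to $n\bmod 3$. If $n\equiv 0\bmod 3$, then $n\ge 18$ is precisely the hypothesis of Lemma~\ref{Lemma_(3,2)_0_mod_3}, which yields a $5$-uniform $\cH$ with $EI(\cH)=C_n$, $|\cE|=\frac{2n}{3}=\lceil\frac{2n}{3}\rceil$, and \emph{all} hyperedges $(3,2)$-hyperedges -- in particular at most two non-$(3,2)$-hyperedges. If $n\equiv 1\bmod 3$, then $n\ge 18$ together with $n\equiv 1\bmod 3$ forces $n\ge 19$, so Lemma~\ref{Lemma_(3,2)_1_mod_3} applies and produces $\cH$ with $|\cE|=\frac{2}{3}(n-1)+1=\lceil\frac{2n}{3}\rceil$ in which exactly one $(5)$-hyperedge and one $(2,2,1)$-hyperedge occur and all remaining hyperedges are $(3,2)$-hyperedges; that is exactly two non-$(3,2)$-hyperedges. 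If $n\equiv 2\bmod 3$, then $n\ge 18$ with $n\equiv 2\bmod 3$ forces $n\ge 20$, so Lemma~\ref{Lemma_(3,2)_2_mod_3} applies and gives $\cH$ with $|\cE|=\frac{2}{3}(n-2)+1=\lceil\frac{2n}{3}\rceil$, containing exactly one $(5)$-hyperedge and otherwise only $(3,2)$-hyperedges; that is one non-$(3,2)$-hyperedge.

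In all three cases we have exhibited a $5$-uniform hypergraph $\cH=(V,\cE)$ with $EI(\cH)=C_n$, $|\cE|=\lceil\frac{2n}{3}\rceil$, and at most two hyperedges that are not $(3,2)$-hyperedges, which is the assertion of the theorem. There is essentially no obstacle here: the only points needing care are (i) checking that the lower bound $n\ge 18$ in the theorem statement is consistent with the sharper thresholds ($19$, resp. $20$) of the two non-trivial residue classes -- which it is, automatically, since the extra residue constraint pushes $n$ up to the required value -- and (ii) verifying the three ceiling-arithmetic identities above. The genuinely hard work (the verifications $\cE(EI(\cH))=E(C_n)$ for the constructions) has already been absorbed into the cited Lemmas~\ref{Lemma_(3,2)_0_mod_3}--\ref{Lemma_(3,2)_2_mod_3}, so the proof of the theorem itself is just this bookkeeping.
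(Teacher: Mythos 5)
Your overall strategy is exactly the intended one: the paper offers no proof of Theorem~\ref{Theorem_(3,2)} beyond the phrase ``to sum up the above Lemmas,'' so a case distinction on $n \bmod 3$ that feeds into Lemmas~\ref{Lemma_(3,2)_0_mod_3}--\ref{Lemma_(3,2)_2_mod_3} and checks the ceiling arithmetic is all that is required. The cases $n\equiv 0$ and $n\equiv 1 \bmod 3$ are handled correctly, including the observation that the residue constraint automatically lifts $n\ge 18$ to the thresholds $19$ and $20$.

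However, your arithmetic in the case $n\equiv 2\bmod 3$ is wrong, and the error is not cosmetic. For $n\equiv 2\bmod 3$ one has $\frac{2n}{3}=\frac{2(n-2)}{3}+\frac{4}{3}$ with $\frac{2(n-2)}{3}$ an integer, so $\lceil\frac{2n}{3}\rceil=\frac{2(n-2)}{3}+2=\frac{2n+2}{3}$, \emph{not} $\frac{2(n-2)}{3}+1$ as you assert (``rounds up by adding $\frac{2}{3}$'' to $\frac{4}{3}$ gives $2$, not $1$). Lemma~\ref{Lemma_(3,2)_2_mod_3} delivers $|\cE|=\frac{2}{3}(n-2)+1=\frac{2n-1}{3}=\lceil\frac{2n}{3}\rceil-1$. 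Concretely, for $n=20$ the lemma (and its explicit example, which has $13$ hyperedges) gives $|\cE|=13$, while $\lceil\frac{40}{3}\rceil=14$. So your chain of identities does not establish the theorem as stated; what it actually exposes is a discrepancy inside the paper itself between Lemma~\ref{Lemma_(3,2)_2_mod_3} and Theorem~\ref{Theorem_(3,2)} (the theorem's exact-cardinality claim $|\cE|=\lceil\frac{2n}{3}\rceil$ fails for $n\equiv 2\bmod 3$; one would have to weaken it to $|\cE|\le\lceil\frac{2n}{3}\rceil$ or record the exact value $\frac{2n-1}{3}$ in that residue class, which is unsurprising since the $(5)$-hyperedge contributes four half-edges and so the lower bound of Theorem~\ref{TheoMIN_(3,2)} no longer applies). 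Rather than papering over this with a false identity, your proof should flag it: as written, the $n\equiv 2\bmod 3$ branch of your argument does not go through.
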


\subsection{ $\mu^5_n = \lceil \frac{3 n}{5} \rceil$ for $n \equiv 0 \mod 5$}

With respect to Corollary \ref{CorMIN_l_uniform} the question arises, whether or not the lower bound $\lceil \frac{3 n}{5} \rceil$ for $| \cE|$  given in this Corollary is sharp. Because of $\lceil \frac{3 n}{5} \rceil < \lceil \frac{2 n}{3} \rceil$ and Theorem \ref{TheoMIN_(3,2)}, a greater variety of hyperedges than (3,2)-hyperedges has to be used. For $n \equiv 0 \mod 5$ we will show that the bound $\lceil \frac{3 n}{5} \rceil = \frac{3 n}{5}$ is a sharp one, i.e. in this case $\mu^5_n = \lceil \frac{3 n}{5} \rceil$. If $n$ is not a multiple of 5, this problem is still open (see Conjecture \ref{Conj}).

\smallskip
In 5-uniform hypergraphs, there can be seven possible types of hyperedges: (5)-, (4,1)-, (3,2)-, (3,1,1)-, (2,2,1)-, (2,1,1,1)- and (1,1,1,1,1)-hyperedges. Note that the (5)-hyperedges in $\cH$ can provide four half-edges in $EI(\mathcal{H})=C_n$. Three half-edges  can result from any (4,1)- and (3,2)-hyperedge, whereas the number of half-edges generated by (3,1,1)- and (2,2,1)-hyperedges is two. (2,1,1,1)-hyperedges lead only to one half-edge.  Obviously, in order to minimize $|\cE|$ in $ \cH = ( V, \cE)$ with $EI(\mathcal{H})=C_n$, the (1,1,1,1,1)-hyperedges make no sense. But which combinations of the other six types are good candidates for a minimum  $|\cE|$?

In \cite{Pae} an answer is given by proving Corollary \ref{CorMIN_l_uniform} in an alternative way, namely by means of linear optimization. For $n \equiv 0 \mod 5$, this leads to a construction which shows  $\mu^5_n = \lceil \frac{3 n}{5} \rceil$.
We give a rough description of this approach.

Let $\cH = ( V, \cE)$ be a 5-uniform hypergraph with $n$ vertices containing no (1,1,1,1,1)-hyperedge. We introduce six variables $x_5, x_4, x_{32}, x_3, x_{22}$ and $x_2$ corresponding to the possible six types of hyperedges in $\cE$, i.e. to the (5)-, (4,1)-, (3,2)-, (3,1,1)-, (2,2,1)- and (2,1,1,1)-hyperedges, respectively.
%, in the following sense.
Of course, the cardinality $| \cE|$ is the sum of the numbers of the hyperedges of each type. Using the described variables, this sum shall be written as

\centerline{$| \cE | = x_5 \cdot n + x_4 \cdot n +  x_{32} \cdot n +  x_3 \cdot n +  x_{22} \cdot n + x_2  \cdot n$.}

So, depending on $n$, our variables describe how many hyperedges of the corresponding type (represented by the indices of the variables) are contained in $\cE$.

Because we are searching for a lower bound for the number of hyperedges of a $5$-uniform hypergraph $\mathcal{H}=(V,\mathcal{E})$ with $EI(\mathcal{H})=C_n$, this leads to the following objective function which is to be minimized.

\begin{equation}
 x_5 \cdot n + x_4 \cdot n +  x_{32} \cdot n +  x_3 \cdot n +  x_{22} \cdot n + x_2  \cdot n \; \rightarrow \; min .
 \end{equation}

\medskip
The non-negativity of the variables has to be guaranteed:

\begin{equation}
x_5, x_4, x_{32}, x_3, x_{22}, x_2 \ge 0.
 \end{equation}

\medskip

To generate the $n$ edges of $C_n$ we need at least $2n$ half-edges. In connection with the possible number of half-edges of each of the six types of hyperedges this yields the constraint

\begin{equation}
 2n \le 4  x_5 \cdot n + 3 x_4 \cdot n +  3 x_{32} \cdot n +  2 x_3 \cdot n +  2 x_{22} \cdot n + x_2  \cdot n .
 \end{equation}

\medskip

At the beginning of Subsection 4.1 we shortly discussed the problem of "middle vertices" in a 5-section $(i, i+1, i+2, i+3, i+4)$ and a 4-section $(i, i+1, i+2, i+3)$, respectively. That is -- in case of a 5-section $(i, i+1, i+2, i+3, i+4)$ in $\cH$ -- to obtain the edges $\{i+1, i+2\} \in E(C_n)$ and $\{i+2, i+3\} \in E(C_n)$, we need two 2-sections $(i+1, i+2)$ and $(i+2, i+3)$ in $\cH$ which have to come from some $(3,2)$-, $(2,2,1)$ or $(2,1,1,1)$-hyperedge. So we have to have "sufficiently many" such 2-sections in our hypergraph $\cH$. Analogously, we have to argue in case of a 4-section $(i, i+1, i+2, i+3)$ in $\cH$ -- in order to obtain the edge $\{i+1, i+2\} \in E(C_n)$ we need one 2-section, namely $(i+1, i+2)$.

This leads to the constraint

\begin{equation}
 2 x_5 \cdot n +  x_4 \cdot n  \le  x_{32} \cdot n +    2 x_{22} \cdot n + x_2  \cdot n.
 \end{equation}

\medskip
We divide (1), (3) and (4) by $n$ and obtain the linear optimization problem

\begin{align}\label{opti}
x_{5}+x_{4}+x_{32}+x_{3}+x_{22}+x_{2} &\rightarrow min\\
2&\leq 4x_{5}+3x_{4}+3x_{32}+2x_{3}+2x_{22}+x_{2}  \notag  \\
2x_{5}+x_{4}&\leq x_{32}+2x_{22}+x_{2}\notag \\
x_{5},x_{4},x_{32},x_{3},x_{22},x_{2}&\geq 0 \notag
\end{align}.

In \cite{Pae}, a generalization of the Simplex algorithm (the {\em Big M method}, see \cite{Dom}) is used to solve (5). The solution of (5) includes two results. The first one is the lower bound $|\cE| \ge \frac{3}{5}n$ for the number of hyperedges in $\cH$. As a second result we obtain the values $\frac{2}{5}$ and $\frac{1}{5}$ for the two basis variables $x_{32}$ and $x_5$, respectively.

In order to reach the lower bound $\frac{3}{5}n$ for the number of hyperedges, $n$ has to be a multiple of 5. So let $n \equiv 0 \mod 5$. Since the remaining variables $x_4, x_3, x_{22}, x_2 $ play no role in the solution, the idea suggests itself to try to construct a 5-uniform hypergraph $\cH = (V, \cE)$ with $EI(\mathcal{H})=C_n$ that contains $\frac{2}{5} n$ (3,2)-hyperedges and $\frac{1}{5} n$ (5)-hyperedges. This idea had been very useful to prove the next theorem.

\begin{theorem}[\cite{Pae}]
\label{Theorem_Min5}
Let $n \ge 20$ and $n \equiv 0 \mod 5$. Then there exists a 5-uniform hypergraph $\mathcal{H}=(V,\mathcal{E})$ with
$EI(\mathcal{H})=C_n$ and $| \cE | = \frac{3}{5} n$.
\end{theorem}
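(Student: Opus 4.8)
The plan is to make the linear program~\eqref{opti} \emph{tight}: build $\cH$ out of exactly $\tfrac15 n$ $(5)$-hyperedges and exactly $\tfrac25 n$ $(3,2)$-hyperedges, so that $|\cE| = \tfrac15 n + \tfrac25 n = \tfrac35 n$ comes for free. I would look for a construction periodic of period $5$ in the vertex labels (admissible since $5\mid n$): let the $(5)$-hyperedges $E_j = \{5j{+}1,5j{+}2,5j{+}3,5j{+}4,5j{+}5\}$, $0 \le j < \tfrac n5$, tile $V$ into $\tfrac n5$ consecutive blocks, and attach to each block two $(3,2)$-hyperedges. For $n \ge 25$ a workable choice is
\[
F_j = \{\, 5j,\ 5j{+}1,\ 5j{+}2,\ 5j{+}8,\ 5j{+}9 \,\}, \qquad
G_j = \{\, 5j{+}4,\ 5j{+}5,\ 5j{+}6,\ 5j{+}17,\ 5j{+}18 \,\}
\]
(indices mod $n$), with $\cE$ the union of the three families; the lone small instance $n = 20$ I would dispose of by a separate explicit hypergraph, verified by hand (or directly, with the function from the introduction). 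First one records the trivialities: for $n \ge 25$ each listed $3$-section and $2$-section are disjoint and non-adjacent, so $F_j,G_j$ are genuine $(3,2)$-hyperedges, and all $\tfrac35 n$ hyperedges are distinct.

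The easier inclusion $E(C_n) \subseteq \cE(EI(\cH))$ is checked edge by edge. Each $E_j$ already supplies one ``half-edge'' for each of the four interior edges of its block; the $3$-section of $F_j$ meets $E_j$ in exactly $\{5j{+}1,5j{+}2\}$ and that of $G_j$ in exactly $\{5j{+}4,5j{+}5\}$, giving the second half-edge for the two outer interior edges, while $G_j \cap F_{j+1} = \{5j{+}5,5j{+}6\}$ produces the inter-block boundary edge; and the $2$-sections satisfy $F_j \cap E_{j+1} = \{5j{+}8,5j{+}9\}$ and $G_j \cap E_{j+3} = \{5j{+}17,5j{+}18\}$, which are exactly the two central interior edges still missing a second half-edge. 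Thus all $n$ edges of $C_n$ appear, each consuming exactly two of the $2n$ available half-edges.

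The real content is the reverse inclusion $\cE(EI(\cH)) \subseteq E(C_n)$: one must show that \emph{no} two hyperedges of $\cH$ meet in a set that is neither empty, nor a singleton, nor a pair $\{v,v{+}1\}$. By the period-$5$ symmetry it suffices to examine $X_0 \cap Y_k$ for $0 \le k < \tfrac n5$ across the six ordered type-pairs $(E,E),(E,F),(E,G),(F,F),(G,G),(F,G)$. Since the five labels of any $F$- or $G$-hyperedge occupy pairwise distinct residue classes modulo $5$, distinct $F$'s, distinct $G$'s and distinct $E$'s are pairwise disjoint; for the mixed pairs one reads off, residue class by residue class, the block-shifts $k$ at which two hyperedges can share a vertex, and finds that besides the five intended consecutive-pair meetings ($E_j{\cap}F_j$, $E_j{\cap}F_{j-1}$, $E_j{\cap}G_j$, $E_j{\cap}G_{j-3}$, $F_{j+1}{\cap}G_j$) only isolated single-vertex overlaps remain, \emph{provided} the shift values occurring in each type-pair are pairwise distinct modulo $\tfrac n5$. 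For the offsets above these shift sets are $\{0,\pm1\}$ for $E$--$F$, $\{0,-1,-3\}$ for $E$--$G$ and $\{1,-1,-2,-3\}$ for $F$--$G$, and while $n\ge 20$ the only collision that can occur is $-3\equiv 1\pmod{\tfrac n5}$, i.e.\ $n=20$ --- where $F_j$ and $G_{j+1}$ would meet in the non-consecutive pair $\{5j{+}2,5j{+}9\}$; this is exactly why $n=20$ must be handled apart. Completing this bookkeeping in full detail --- routine case by case, but with many cases, plus the separate $n=20$ construction --- is the main obstacle, and is precisely the verification carried out in \cite{Pae}. Together with Corollary~\ref{CorMIN_l_uniform}, the resulting $\cH$ yields $\mu^5_n = \lceil\tfrac{3n}{5}\rceil$ for every $n \ge 20$ with $n\equiv 0\mod 5$.
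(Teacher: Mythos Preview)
Your approach is essentially the paper's: a period-$5$ construction with $\frac{n}{5}$ consecutive $(5)$-blocks and two families of $(3,2)$-hyperedges, yielding $|\cE|=\frac{3n}{5}$. Your residue-class bookkeeping and the case analysis you outline are correct; in particular the collision $-3\equiv 1\pmod{n/5}$ at $n=20$ is real and forces you to treat that instance separately.

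The only substantive difference is the choice of offsets for the $(3,2)$-hyperedges. The paper takes
\[
\{i,i{+}1,i{+}2,i{-}6,i{-}5\}\ (i\equiv 4\bmod 5)\quad\text{and}\quad
\{i,i{+}1,i{+}2,i{+}7,i{+}8\}\ (i\equiv 0\bmod 5),
\]
which already works at $n=20$ and so avoids your extra base case. On the other hand, in the paper's families the five vertices of a $(3,2)$-hyperedge do \emph{not} hit five distinct residue classes modulo $5$ (e.g.\ $\{4,5,6,n{-}2,n{-}1\}$ uses residue $4$ twice), so hyperedges of the same type are no longer pairwise disjoint and the clean ``one residue, one shift'' reduction you exploit is unavailable there; the paper's verification (deferred to \cite{Pae}) therefore proceeds by a longer direct case distinction. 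Either set of offsets proves the theorem; yours trades a small special case for a tidier intersection analysis.
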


{\em To the proof.}
The hypergraph $\mathcal{H}=(V,\mathcal{E})$ with $V=\{1,2,...,n\}$ and
\begin{flalign*}
\mathcal{E}=&\{e_i=\{i,i+1,i+2,i+3,i+4\} \text{ }| \text{ }i \in \{1,6,11,...,n-4\}\} &\cup &&&&&&&&\phantom{h}\\
 &\{e_i=\{i,i+1,i+2,i-6,i-5\} \text{ }| \text{ }i \in \{4,9,14,...,	
n-1\} \}&\cup\\
&\{e_i=\{i,i+1,i+2,i+7,i+8\} \text{ }| \text{ }i \in \{5,10,15,...,n\} \}
\end{flalign*}
has the required properties. \\
Again, the proof can be done by a detailed case distinction (see \cite{Pae}). \hqed

\medskip
{\em Example.} For $n = 20$ we have $\cH = ( V, \cE)$ with $V = \{ 1, 2, \ldots, 20 \}$ and
\begin{flalign*}
\mathcal{E}=\{&\{1,2,3,4,5\},\{6,7,8,9,10\},\{11,12,13,14,15\},\{16,17,18,19,20\},\hspace{4,2cm}\\
&\{4,5,6,18,19\},\{9,10,11,3,4\},\{14,15,16,8,9\},\{19,20,1,13,14\},\\
&\{5,6,7,12,13\},\{10,11,12,17,18\},\{15,16,17,2,3\},\{20,1,2,7,8\}\}.
\end{flalign*}

\medskip
\noindent
$\cH = ( V, \cE)$ contains $\frac{3}{5} n = 12$ hyperedges and its edge intersection hypergraph is the cycle $C_{20}$; the hypergraph  is drawn in Figure 10.
\begin{figure}[h]
	\centering
	\includegraphics[width=12cm]{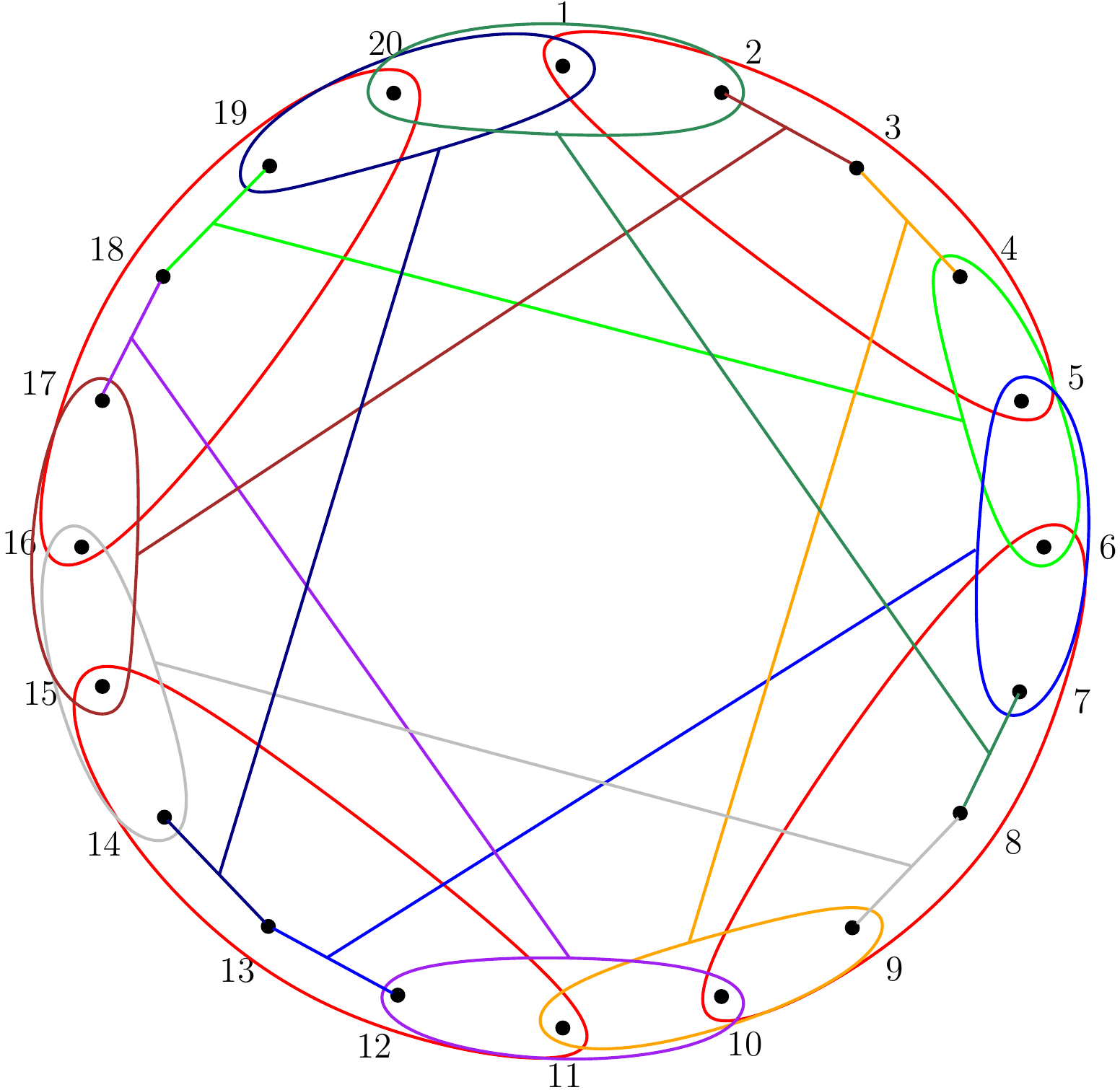}
	\caption{A $5$-uniform hypergraph $\mathcal{H}=(V,\mathcal{E})$ with $EI(\mathcal{H})=C_{20}$ and $| \cE | = 12$.}
	\label{img:Bsp20Knoten5}
\end{figure}

\section{Concluding remarks}

For $ k \le 6$ and sufficiently large $n$, we gave constructions of $k$-uniform hypergraphs $\cH$ having the edge intersection hypergraph $EI(\mathcal{H})=C_{n}$. The constructed hypergraphs $\cH = ( V, \cE)$ have minimum cardinality of the set of hyperedges, i.e.   $| \cE | = \mu^k_n$, if $k \in \{3,4,6 \}$ and $k = 5 \; \wedge \; n \equiv 0 \mod 5$, respectively.

Consequently, in theses cases, the ceiling of the lower bound $\frac{3n}{k}$ for $\mu^k_n$ given in Corollary \ref{CorMIN_l_uniform} is a sharp bound for $\mu^k_n$, i.e. $\mu^k_n = \lceil \frac{3n}{k} \rceil$.
This leads to the following conjecture.

\begin{conjecture}
\label{Conj}
For any $k \ge 3$ there exists an $n_k \in \N$ such that for every $n \ge n_k$ we have $\mu^k_n = \lceil \frac{3n}{k} \rceil$.
\end{conjecture}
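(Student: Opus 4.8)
The plan is to reduce the conjecture to a pure \emph{construction} problem and then to build the required hypergraphs from a periodic ``bulk'' plus a bounded number of boundary ``patches.'' The lower bound is already free: by Corollary \ref{CorMIN_l_uniform} every $k$-uniform $\cH$ with $EI(\cH)=C_n$ satisfies $|\cE|\ge\frac{3n}{k}$, and since $|\cE|$ is an integer this yields $\mu^k_n\ge\lceil\frac{3n}{k}\rceil$. Hence everything rests on exhibiting, for each fixed $k\ge3$ and all $n\ge n_k$, a $k$-uniform hypergraph $\cH$ with $EI(\cH)=C_n$ and $|\cE|=\lceil\frac{3n}{k}\rceil$. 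When $k\mid 3n$ the target is $\frac{3n}{k}$ and, by Theorem \ref{TheoOPT3REG}, it suffices to produce a $3$-regular such $\cH$; when $k\nmid 3n$ the hypergraph must instead be ``almost $3$-regular,'' carrying exactly $k-(3n\bmod k)$ excess incidences on a bounded set of vertices.

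First I would fix the optimal \emph{mix of hyperedge types} by generalizing the linear program of Subsection 4.2 to arbitrary $k$. A type is a partition $P=(l_1,\ldots,l_t)$ of $k$ into section lengths; it offers $h(P)=k-t$ half-edges, ``demands'' $d(P)=\sum_i\max(l_i-3,0)$ dedicated $2$-sections for its interior pairs, and ``supplies'' $s(P)=|\{i:l_i=2\}|$ such $2$-sections. Minimizing $\sum_P x_P$ subject to $\sum_P h(P)x_P\ge2$ and $\sum_P d(P)x_P\le\sum_P s(P)x_P$ (with $x_P\ge0$) has optimum value $\frac{3}{k}$ for every $k$: with dual multipliers $\frac{3}{2k}$ and $\frac{1}{2k}$, weak duality reduces part by part to $l+[\,l=2\,]\le3+\max(l-3,0)$, which holds for every integer $l\ge1$, with equality for every $l\ge2$ and strict inequality only at $l=1$. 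This reproves the lower bound and, through the tight constraints, shows that an optimal mix uses no length-$1$ section, matching the recipes already employed for $k\in\{3,4,5,6\}$ (e.g.\ the $\frac15 n$ $(5)$- and $\frac25 n$ $(3,2)$-hyperedges of Theorem \ref{Theorem_Min5}).

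Next I would realize an optimal integral vertex of this program as a periodic family on $\Z_n$. Writing $q_k$ for the common denominator of the coordinates of the chosen optimal vertex, for $n$ a multiple of $q_k$ one places the prescribed number of hyperedges of each type by a fixed stride pattern, so that every vertex lies in exactly three hyperedges and every consecutive pair $\{j,j+1\}$ is the intersection of precisely two of them. For the residue classes $n\not\equiv0\pmod{q_k}$ I would keep the same bulk on a nearby multiple of $q_k$ and append a \emph{bounded} number of patch hyperedges --- the devices already seen in the case analyses above, namely short tails, one $(5)$- or $(3,1)$- or $(2,2,1)$-type hyperedge, and ``triply-generated'' pairs as in Case $2$ of Theorem \ref{TheoMain_4U} --- chosen to close the cycle and to raise $|\cE|$ by exactly $\lceil\frac{3n}{k}\rceil-\lfloor\frac{3n}{k}\rfloor$. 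Since the patch has size independent of $n$, taking $n_k$ large enough guarantees both that the periodic bulk exists and that $C_n$ has room to host the patch.

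The hard part --- and the reason the statement is only conjectured --- is the verification $\cE(EI(\cH))\subseteq E(C_n)$, i.e.\ that the construction generates \emph{no unwanted} hyperedge: two hyperedges may meet in a set of size $\ge2$ only along an intended consecutive pair $\{j,j+1\}$, never in two non-adjacent vertices. As the paper repeatedly stresses, this is exactly the direction that explodes into case distinctions. The crux is therefore to choose the strides and tail offsets so that the pairwise intersection of any two hyperedges has size $\le1$ except on the designed pairs, and to prove this \emph{uniformly in $k$} rather than case by case. Because each efficient hyperedge carries up to about $k/2$ sections, the number of offset pairs to control grows with $k$, so the missing ingredient is a clean spacing argument --- for instance forcing all tail offsets to be mutually incongruent modulo small periods, so that a length-$2$ overlap can occur only where intended --- that would push the $k\le6$ results of this paper to all $k$.
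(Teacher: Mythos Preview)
The paper does \emph{not} prove Conjecture~\ref{Conj}; it is explicitly left open (``this Conjecture is open not only for all $k>6$ but also for $k=5\;\wedge\;n\not\equiv0\bmod5$''). So there is no proof in the paper to compare your proposal against, and what you have written is not a proof either --- as you yourself acknowledge in the final paragraph.

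Your lower-bound discussion is fine and matches the paper: $\mu^k_n\ge\lceil 3n/k\rceil$ is immediate from Corollary~\ref{CorMIN_l_uniform}. Your generalized linear program is also correct; the dual check $l+[l=2]\le 3+\max(l-3,0)$ is valid for every integer $l\ge1$, so the LP optimum is indeed $3/k$, and the complementary-slackness conclusion (no length-$1$ sections in an optimal mix) is sound. This is a nice uniform reformulation of what the paper does ad~hoc for $k=5$ in Subsection~4.2.

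The gap is exactly where you locate it. Your ``periodic bulk plus bounded patches'' scheme is the natural plan, but the step ``choose the strides and tail offsets so that the pairwise intersection of any two hyperedges has size $\le1$ except on the designed pairs, and prove this uniformly in $k$'' is precisely the content of the conjecture. In the paper, even for fixed small $k$ this verification consumes pages of case analysis (see the remarks after Theorems~\ref{TheoMain_4U} and~\ref{Theorem_Min5} and the references to \cite{Pae}), and no mechanism is offered that would survive the passage to arbitrary $k$. Your suggestion of forcing tail offsets to be ``mutually incongruent modulo small periods'' is a heuristic, not an argument: with roughly $k/2$ sections per hyperedge and $\Theta(n/k)$ hyperedges, the number of offset pairs to separate grows, and you have not shown that enough residues exist, nor that the resulting placements still hit every required consecutive pair exactly twice. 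Until that spacing argument is supplied, what you have is a credible proof \emph{strategy} for Conjecture~\ref{Conj}, not a proof.
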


Note that this Conjecture is open not only for all $k > 6$ but also for $k = 5 \; \wedge \; n \not\equiv 0 \mod 5$.


\begin{thebibliography}{}	
\bibitem{GST5} C. Berge, \textit{Graphs and Hypergraphs}, North Holland, Amsterdam (1973).	
%\bibitem{Bie} Th. Biedl, M. Stern, \textit{On edge-intersection graphs of $k$-bend paths
%in grids}, Discr.  Math. and Theor. Comp. Sci. \textbf{12(1)} (2010) 1--12.
%\bibitem{Cam} K. Cameron, S. Chaplick, C.T. Hoang, \textit{Edge intersection graphs of $L$-shaped paths in grids}, Discr. Appl. Math. \textbf{210} (2016) 185--194.
%%\bibitem{GST2016}  C. Garske, M. Sonntag, H.-M. Teichert, \textit{Niche hypergraphs}, Discussiones Mathematicae Graph Theory \textbf{36} (2016) 819--832.
%\bibitem{Gol} M.C. Golumbic, R.E. Jamison, \textit{The Edge Intersection Graphs
%of Paths in a Tree}, J. Comb. Theory, Series B \textbf{38} (1985) 8--22.
\bibitem{Dom} W. Domschke, A. Drexl, R. Klein, A. Scholl, \textit{Einführung in Operations Research}, Springer Gabler (2015).
\bibitem{Pae} S. P\"atz, \textit{Konstruktion uniformer Hypergraphen zur Erzeugung von Kreisen als Edge-Intersection-Hypergraph}, diploma thesis, TU Bergakademie Freiberg (2020).
%\bibitem{GST33} J. Park, Y. Sano, \textit{The double competition hypergraph of a digraph}, Discr. Appl. Math. \textbf{195} (2015) 110 --113.
%\bibitem{RW} R.C. Read, R.J. Wilson, \textit{An Atlas of Graphs}, Oxford University Press, New York, 1998.
%\bibitem{ROB} G. Robers, \textit{EI-Hypergraphen und deren Eigenschaften}, Bachelor Thesis, University of L\"ubeck, 2017.
%\bibitem{Sku} P.V. Skums, S.V. Suzdal, R.I. Tyshkevich, \textit{Edge intersection graphs of linear 3-uniform hypergraphs}, Discr. Math. \textbf{309} (2009) 3500--3517.
\bibitem{STEIH} M. Sonntag, H.-M. Teichert, \textit{Edge intersection hypergraphs},  Discussiones Mathematicae Graph Theory (2021, in press), 1-26, DOI: 10.7151/dmgt.2435.
%\bibitem{STCac} M. Sonntag, H.-M. Teichert, \textit{Nearly all cacti are edge intersection hypergraphs of 3-uniform hypergraphs}, arXiv:1906.05639 [math.CO] (2019), 1-9.
\bibitem{ST} M. Sonntag, H.-M. Teichert, \textit{Cycles as edge intersection hypergraphs}, arXiv:1902.00396 [math.CO] (2019), 1-21.
\bibitem{WMath} Wolfram Research, Inc., MATHEMATICA$^{\textregistered}$, Version 8.0, Champaign, IL (2010).
\end{thebibliography}
\end{document}